\titleformat*{\section}{\normalsize\bfseries\centering}
\titleformat*{\subsection}{\normalsize\itshape}
\newtheoremstyle{fact}
     {\topsep}
     {\topsep}
     {\slshape}
     {}
     {\bfseries}
     {}
     { }
     {\thmname{#1}\thmnumber{ #2.}\thmnote{ \rm (#3)}}
\newtheoremstyle{mylabel} 
        {\topsep}
        {\topsep}
        {\itshape}
        {}
        {\bfseries}
        {}
        { }
        {\thmname{#1}\thmnote{ #3}.} 
\newtheorem{theorem}{Theorem}[section]
\newtheorem{Ltheorem}{Theorem}
\newtheorem*{theorem*}{Theorem} 
\newtheorem{lemma}[theorem]{Lemma}
\newtheorem{proposition}[theorem]{Proposition}
\newtheorem{corollary}[theorem]{Corollary}
\theoremstyle{definition}
\newtheorem*{remark*}{Remark}
\newtheorem*{question*}{Question}
\newtheorem*{examples*}{Examples}  
\newtheorem{example}[theorem]{Example}
\newtheorem*{example*}{Example}
\theoremstyle{mylabel}
\newtheorem*{Ltheorem*}{Theorem}
\theoremstyle{fact}
\newtheorem{flemma}[theorem]{Lemma}
\def\proofont{\fontseries{bx}\fontshape{sc}\selectfont}
\def\proofname{Proof. }
\newcommand{\colim}{\operatorname*{colim}}
\newcommand{\homeo}{\operatorname{Homeo}}
\newcommand{\supp}{\operatorname{supp}}
\renewenvironment{proof}[1][\proofname]{\par
  \normalfont
  \topsep6\p@\@plus6\p@ \trivlist
  \item[\hskip\labelsep\noindent\proofont #1]\ignorespaces
}{%
  \qed\endtrivlist
}
\def\@fnsymbol#1{\ifcase#1\or * \or 1 \or 2  \else\@ctrerr\fi\relax}
\let\mytitle\@title
\author{Rafael Dahmen and G\'abor Luk\'acs}
\title{Long colimits of topological groups III: Homeomorphisms of products and coproducts\thanks{2020 Mathematics Subject Classification: 
Primary 22A05; Secondary 22F50, 03E10.}}
\begin{document}

\makeatletter
\let\mytitle\@title
\chead{\small\itshape R. Dahmen and G. Luk\'acs / Long colimits of topological groups III}
\fancyhead[RO,LE]{\small \thepage}
\makeatother

\maketitle

\def\thanks#1{}

\thispagestyle{empty}

\begin{abstract}
The group of compactly supported homeomorphisms on a Tychonoff  space can be topologized
in a number of ways, including as a colimit of homeomorphism groups with a given compact support, or
as a subgroup of the homeomorphism group of its Stone-\v{C}ech compactification. 
A space is said to have the {\itshape Compactly Supported Homeomorphism Property} ({\itshape CSHP})
if these two topologies coincide. The authors provide necessary and sufficient
conditions for finite products of ordinals equipped with  the order topology to have CSHP.
In addition, necessary conditions are presented for finite products and coproducts
of spaces to have CSHP.
\end{abstract}

\section{Introduction}

Given a compact space $K$, it is well known that the homeomorphism 
group $\homeo(K)$ is a topological group with the compact-open topology (\cite{Arens}).
If $X$ is assumed to be only Tychonoff, then 
for every compact subset $K\subseteq X$,
the group $\homeo_K(X)$ of homeomorphisms supported in $K$ (i.e., identity on $X\backslash K$) is  a topological
group with the compact-open topology; however,
the full homeomorphism group $\homeo(X)$ equipped
with the compact-open topology need not be a topological group (\cite{dijkstra}).
Nevertheless, $\homeo(X)$ can be turned into
a topological group by embedding it into $\homeo(\beta X)$, 
the homeomorphism group of the Stone-\v{C}ech compactification of $X$. The latter
topology has also been studied under the name of {\itshape zero-cozero topology}
(\cite{osipov2020}, \cite{diConcilio}).

For a Tychonoff space $X$, let $\mathscr{K}(X)$ denote the family of compact subsets of $X$.
In light of the foregoing, 
the group $\homeo_{cpt}(X)\coloneqq\bigcup\limits_{K\in \mathscr{K}(X)}\homeo_K(X)$
of the compactly supported homeomorphisms of $X$ admits three seemingly different topologies,
listed from the finest to the coarsest:

\begin{enumerate}

\item 
the finest topology making all inclusions $\homeo_K (X) \to \homeo_{cpt}(X)$ continuous
(i.e., the colimit in the category of topological spaces and continuous functions);

\item
 the finest {\itshape group} topology making all inclusions $\homeo_K (X) \to \homeo_{cpt}(X)$ continuous
(i.e., the colimit in the category of topological {\itshape groups} and continuous {\itshape homomorphisms}); and

\item
the topology induced by $\homeo(\beta X)$.

\end{enumerate}

Recall that the groups $\{\homeo_K(X)\}_{K \in \mathscr{K}(X)}$ are said to have the {\itshape Algebraic Colimit Property}
({\itshape ACP}) if the first and the second topologies coincide (\cite{RDGL1}, \cite{RDGL2}). Recall further that
a space $X$ is said to have the {\itshape Compactly Supported Homeomorphism Property} 
({\itshape CSHP}) if the first and the last topologies coincide (\cite{RDGL1}), in which case all three topologies are equal.

In a previous work, the authors gave sufficient conditions for a finite product of ordinals to have CSHP 
(\cite[Theorem D(c)]{RDGL1}). The main result of this paper is that the same conditions are also
necessary, thereby providing a complete characterization of CSHP among such spaces.

\begin{Ltheorem} \label{main:thm:ordinals}
Let $X=\lambda_1\times \cdots \lambda_k \times \mu_1\times \cdots \times \mu_l$  equipped with the product topology, 
where $\lambda_1,\ldots,\lambda_k$ are infinite limit ordinals and $\mu_1,\ldots,\mu_k$ are successor ordinals. 
The space $X$ has CSHP if and only if there is
an uncountable regular cardinal $\kappa$ such that
$\lambda_1 = \cdots =\lambda_k = \kappa$ and
$\mu_i \leq \kappa$ for every $i=1,\ldots,l$.
\end{Ltheorem}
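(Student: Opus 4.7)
Since sufficiency is established in \cite[Theorem D(c)]{RDGL1}, my plan focuses entirely on necessity. Suppose $X$ violates the stated cardinal condition; I will produce a net $\{h_\alpha\}$ in $\homeo_{cpt}(X)$ that converges to the identity in the topology inherited from $\homeo(\beta X)$, but whose compact supports $K_\alpha \subseteq X$ are not contained in any single compact subset of $X$. Such a net witnesses that the colimit topology on $\homeo_{cpt}(X)$ from item~(1) of the introduction is strictly finer than the subspace topology from $\homeo(\beta X)$ in item~(3), because every identity neighborhood in the colimit topology restricts, on each $\homeo_K(X)$, to a full neighborhood of the identity there.

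The trivial case $k=0$ gives a compact $X$ for which all three topologies coincide, so I assume $k \geq 1$. The failure of the cardinal condition then falls into three (non-exclusive) cases: (a)~some $\lambda_i$ has countable cofinality (including $\lambda_i = \omega$); (b)~every $\lambda_i$ has uncountable cofinality but the $\lambda_i$'s are not all equal to a single uncountable regular cardinal; or (c)~all $\lambda_i$ coincide with a common uncountable regular cardinal $\kappa$ yet some $\mu_j > \kappa$. For each, I will pick a natural ``escape direction'' and define each $h_\alpha$ as a transposition of two neighboring successor ordinals along a cofinal sequence, extended by the identity on the remaining coordinates, with the disjoint compact supports arranged so that they cannot be bounded inside any compact subset of $X$. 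The escape direction is the cofinal sequence itself: in~(a) a countable cofinal sequence in $\lambda_i$; in~(c) a $\kappa$-sequence inside $[\kappa,\mu_j)$; in~(b) the mismatch between the smallest and the largest of the $\lambda_i$'s provides the escape.

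The principal obstacle will be verifying that $\{h_\alpha\}$ converges to the identity in $\homeo(\beta X)$. Since $\beta X$ is compact, the compact-open topology on $\homeo(\beta X)$ coincides with uniform convergence relative to the natural uniformity, so I must show that the pair $(x, h_\alpha(x))$ eventually lies in every preassigned entourage neighborhood of the diagonal of $\beta X \times \beta X$. This is delicate because $\beta X$ need not factor as a product of Stone-\v{C}ech compactifications of the coordinate factors, and its structure depends sharply on cofinalities: $\beta\kappa = \kappa + 1$ for uncountable regular $\kappa$, whereas $\beta\omega$ has an enormous remainder. The technical heart of the proof therefore consists in locating the closures of the supports $K_\alpha$ inside $\beta X$ and checking that each pair of transposed points shares cluster points there, so that the transposition moves points within arbitrarily fine diagonal entourages.

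To conclude that $X$ lacks CSHP, I will combine this convergence in $\homeo(\beta X)$ with a criterion for the colimit topology $\homeo_{cpt}(X) = \colim_{K \in \mathscr{K}(X)} \homeo_K(X)$ -- in the spirit of the framework developed in \cite{RDGL2} -- to the effect that a convergent net must eventually be contained in some single $\homeo_K(X)$. The escape property of the supports will violate this criterion, certifying that the colimit topology is strictly finer than the $\homeo(\beta X)$-topology, and hence that $X$ fails to have CSHP whenever the cardinal condition fails.
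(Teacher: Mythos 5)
There is a genuine gap, and it is located exactly where you place the least weight: the criterion you invoke at the end. It is \emph{not} true that a net converging in the colimit topology $\colim_{K \in \mathscr{K}(X)} \homeo_K(X)$ must eventually lie in a single $\homeo_K(X)$, and consequently a net of transpositions converging to $\operatorname{id}_X$ in $\homeo(\beta X)$ with unbounded supports does not witness failure of CSHP. Concretely, take $X=\kappa\times\kappa$ for $\kappa$ uncountable regular, which \emph{has} CSHP by \cite[Theorem D(c)]{RDGL1}: the transpositions $t_\gamma$ swapping two isolated points near $(\gamma,\gamma)$ have supports escaping every compact set, yet $t_\gamma\to\operatorname{id}$ uniformly on $\beta X=(\kappa+1)^2$, hence also in the colimit topology. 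So your proposed witness exists in spaces that do satisfy CSHP, and the argument proves nothing. Relatedly, your assessment that the ``technical heart'' is the convergence in $\homeo(\beta X)$ is inverted: that is the easy half; the hard half is producing something that is simultaneously \emph{closed} in the colimit topology and \emph{not closed} in the $\beta X$-topology.

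The paper's route is structurally different and cannot be replaced by transpositions. It builds a subset $S=\{h_\alpha\}_{\alpha<\tau}$ of the homeomorphism group that is $\tau$-discrete (every subset of size $<\tau$ is closed in the \emph{coarse} topology), satisfies $|S\cap\homeo_{K_\alpha}|<\tau$ for each $\alpha$ (using a cofinal family with small down-sets, Proposition~\ref{prodcoprod:prop:cofinal}), and accumulates at $\operatorname{id}$; Lemma~\ref{prodcoprod:lemma:notcolim} then separates the topologies. The $\tau$-discreteness forces the homeomorphisms to act nontrivially on a compact factor $Y=\kappa+1$ (Theorems~\ref{main:thm:products} and~\ref{main:thm:COproducts}), and constructing a $\tau$-discrete non-closed subset of $\homeo(\kappa+1)$ is the content of Theorem~\ref{main:thm:taudiscrete}, which requires the Cantor-normal-form homeomorphisms of Lemma~\ref{taudiscrete:lemma:f_delta} precisely because transpositions of isolated points do not have the separation property~\ref{taudiscrete:lemma:f_delta}(b). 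Finally, your case split has a hole at $k=1$ with $\lambda_1$ of uncountable cofinality but not a regular cardinal (e.g.\ $\lambda_1=\omega_1\cdot 2$ or a singular cardinal of uncountable cofinality): there is no ``mismatch between the smallest and largest $\lambda_i$'' to exploit, and the paper needs the sum decomposition $\lambda=\alpha+\xi$ together with the coproduct theorem (Corollary~\ref{taudiscrete:cor:sum} and Theorem~\ref{ordinals:thm:lambda}) to handle it.
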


\begin{example}
By Theorem~\ref{main:thm:ordinals}, the spaces $\omega_1 \times \omega_2$ and $\omega_1 \times (\omega_1 +1)$ (with the product topology) and 
$\omega_2 + \omega_1$ (sum of ordinals with the order topology) do not have CSHP. Furthermore,
the disjoint union (coproduct)  $\omega_1 \amalg \omega_2$ does not have CSHP either (see Corollary~\ref{ordinals:cor:coproduct}).
\end{example}

The proof of Theorem~\ref{main:thm:ordinals} is based on results of general applicability
about CSHP of products and coproducts of spaces.
For an infinite cardinal $\tau$, a subset $S$ of a space $X$ is said to be {\itshape $\tau$-discrete in $X$} if 
every subset of $S$ of cardinality less than $\tau$ is closed in $X$. If $S$ is $\tau$-discrete in $X$, 
then every subset of $S$ of cardinality less than $\tau$ is discrete. Being $\tau$-discrete in $X$ is
equivalent to being closed and discrete in a certain finer topology (Proposition \ref{prelim:prop:tau-discrete}).
Recall that the {\itshape cofinality} $\operatorname{cf}(\mathbb{I},\leq)$ of  a partially
ordered set $(\mathbb{I},\leq)$ is the 
smallest cardinal of a cofinal set contained in $\mathbb{I}$.

\begin{Ltheorem} \label{main:thm:products}
Let $Y$ be a compact Hausdorff space, $Z$ be a zero-dimensional
locally compact Hausdorff pseudocompact space that is not compact, and 
$\tau\coloneqq\operatorname{cf}(\mathscr{K}(Z),\subseteq)$.
If $\homeo(Y)$ contains a $\tau$-discrete subset of cardinality $\tau$ that is not closed,
then the product $Y \times Z$ does not have CSHP.
\end{Ltheorem}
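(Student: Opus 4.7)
The plan is to construct a set $A\subseteq\homeo_{cpt}(Y\times Z)$ that is closed in the colimit topology but not in the $\beta$-topology, using the hypothesized $\tau$-discrete subset of $\homeo(Y)$ to build localized perturbations indexed by a cofinal clopen partition of $Z$. Since $\homeo(Y)$ is a topological group, left translation by the inverse of an accumulation point of the given set preserves both $\tau$-discreteness and cardinality, so after relabelling one may assume the $\tau$-discrete subset is $S=\{h_\alpha:\alpha<\tau\}\subseteq\homeo(Y)\setminus\{\operatorname{id}_Y\}$ with $\operatorname{id}_Y$ as an accumulation point. Using that $Z$ is zero-dimensional, locally compact, not compact, with $\operatorname{cf}(\mathscr{K}(Z),\subseteq)=\tau$, I would choose a strictly increasing cofinal chain $\{C_\alpha\}_{\alpha<\tau}$ of compact clopen subsets with $C_0=\emptyset$, and set $D_\alpha\coloneqq C_{\alpha+1}\setminus C_\alpha$: the $D_\alpha$ are non-empty compact clopen subsets partitioning $Z$, and (since $Z$ is open in $\beta Z$) each $D_\alpha$ is also clopen in $\beta Z$. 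Define
\[
f_\alpha(y,z)\coloneqq\begin{cases}(h_\alpha(y),z)&z\in D_\alpha,\\ (y,z)&z\notin D_\alpha,\end{cases}
\]
a homeomorphism (the pieces glue continuously as $D_\alpha$ is clopen) supported in the compact set $Y\times D_\alpha$, and set $A\coloneqq\{f_\alpha:\alpha<\tau\}$.

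For the $\beta$-topology, I would invoke Glicksberg's theorem (using that $Y\times Z$ is pseudocompact as a product of a compact and a pseudocompact space) to identify $\beta(Y\times Z)=Y\times\beta Z$, so each $f_\alpha$ extends to $\tilde f_\alpha\in\homeo(Y\times\beta Z)$ by the same formula. Given a net $(h_{\alpha_j})$ in $S$ converging uniformly to $\operatorname{id}_Y$, the uniform displacement of $\tilde f_{\alpha_j}$ from the identity on $Y\times\beta Z$ equals the uniform displacement of $h_{\alpha_j}$ from $\operatorname{id}_Y$ on $Y$ (since $\tilde f_{\alpha_j}$ is the identity off $Y\times D_{\alpha_j}$), so $\tilde f_{\alpha_j}\to\operatorname{id}$ in $\homeo(Y\times\beta Z)$, i.e., $f_{\alpha_j}\to\operatorname{id}$ in the $\beta$-topology; since $\operatorname{id}\notin A$, this shows $A$ is not $\beta$-closed.

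For the colimit topology, I would verify that $A\cap\homeo_K(Y\times Z)$ is closed in $\homeo_K(Y\times Z)$ for each compact $K\subseteq Y\times Z$. Cofinality of $\{C_\alpha\}$ yields $\gamma<\tau$ with $\pi_Z(K)\subseteq C_\gamma$, and for $\alpha\geq\gamma$ the set $D_\alpha$ is disjoint from $C_\gamma\supseteq\pi_Z(K)$, forcing $f_\alpha\notin\homeo_K$; thus $A\cap\homeo_K\subseteq\{f_\alpha:\alpha<\gamma\}$. Suppose $f_{\alpha_j}\to g$ in $\homeo_K$ with $\alpha_j<\gamma$. If some $\beta^*<\gamma$ appears cofinally among the $\alpha_j$, the constant subnet at $f_{\beta^*}$ forces $g=f_{\beta^*}\in A$. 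Otherwise every $\beta<\gamma$ appears non-cofinally, so $f_{\alpha_j}|_{Y\times D_\beta}=\operatorname{id}$ eventually for each fixed $\beta$, giving $g|_{Y\times D_\beta}=\operatorname{id}$ for every $\beta<\gamma$; combined with $\supp(g)\subseteq K\subseteq Y\times C_\gamma$, this forces $g=\operatorname{id}$. But then uniform convergence $f_{\alpha_j}\to\operatorname{id}$ translates to $h_{\alpha_j}\to\operatorname{id}_Y$ in $\homeo(Y)$, while $\{h_{\alpha_j}\}\subseteq\{h_\alpha:\alpha<\gamma\}$ is a subset of $S$ of cardinality less than $\tau$, closed in $\homeo(Y)$ by $\tau$-discreteness, and missing $\operatorname{id}_Y$---a contradiction.

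The chief difficulty lies in this last ``diagonal'' subcase of the colimit argument, where $g=\operatorname{id}$ could in principle arise as a limit of $f_{\alpha_j}$'s with the $\alpha_j<\gamma$ varying. The $\tau$-discreteness of $S$ is precisely what excludes this: it guarantees that every subset of $S$ of cardinality less than $\tau$ is closed in $\homeo(Y)$, so $\operatorname{id}_Y$ cannot be approached from within a bounded initial segment $\{h_\alpha:\alpha<\gamma\}$, yet any net in $S$ reaching the accumulation point $\operatorname{id}_Y$ must escape every such initial segment.
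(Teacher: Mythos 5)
Your overall strategy --- localized perturbations of $Y$ over a family of compact clopen pieces of $Z$, closed in the colimit topology but accumulating at the identity in the topology induced by $\homeo(\beta(Y\times Z))=\homeo(Y\times\beta Z)$ via Glicksberg --- is the same as the paper's, and the two topology comparisons are handled correctly. But there is a genuine gap at the very first step of the construction: you \emph{choose a strictly increasing cofinal chain} $\{C_\alpha\}_{\alpha<\tau}$ in $\mathscr{K}(Z)$. The hypothesis only gives $\operatorname{cf}(\mathscr{K}(Z),\subseteq)=\tau$, i.e.\ a cofinal \emph{family} of cardinality $\tau$; a directed poset of cofinality $\tau$ need not contain a cofinal chain, and the obvious transfinite recursion breaks down because at a limit stage $\alpha$ the union $\bigcup_{\beta<\alpha}C_\beta$ need not be contained in any compact set even though $\{C_\beta\}_{\beta<\alpha}$ is not cofinal. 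A concrete instance satisfying all hypotheses of the theorem is $Z=\omega_1\times\omega_2$ (zero-dimensional, locally compact, countably compact hence pseudocompact, not compact): here $\mathscr{K}(Z)$ is cofinally equivalent to the product order on $\omega_1\times\omega_2$, whose cofinality is $\omega_2$, yet any chain cofinal in the second coordinate projects to a nondecreasing $\omega_2$-sequence in $\omega_1$ that is eventually constant, so no cofinal chain exists. Your entire colimit argument leans on the chain structure: it is what makes $A\cap\homeo_K$ land inside an initial segment $\{f_\alpha:\alpha<\gamma\}$ of size $<\tau$, which is where $\tau$-discreteness is applied. The paper's substitute is exactly its Proposition~\ref{prodcoprod:prop:cofinal}: from any cofinal family one extracts a cofinal subfamily $\{K_\alpha\}_{\alpha<\tau}$ with $|\{\beta\mid K_\beta\subseteq K_\alpha\}|<\tau$ for each $\alpha$ (``small down-sets''), which is weaker than being a chain but suffices to bound $|S\cap G_\alpha|$. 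You need this lemma (or an equivalent device) to prove the theorem as stated; your proof as written only covers the case where $\mathscr{K}(Z)$ admits a cofinal chain (e.g.\ $Z$ an uncountable regular cardinal, which happens to be the only case used for Theorem~\ref{main:thm:ordinals}, but not the generality claimed).

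Two smaller remarks. First, even granting a chain, the sets $D_\alpha=C_{\alpha+1}\setminus C_\alpha$ do not partition $Z$: a point that first enters the chain at a limit index lies in no $D_\alpha$. This is harmless for your argument (such points are fixed by every $f_\alpha$, so the limit $g$ is still the identity there), but the word ``partitioning'' should be dropped. Second, once the small-down-set family replaces the chain, your disjoint-support construction loses its advantage, and the paper's route --- supports $Y\times K_\alpha$ with a common point $z_0\in\bigcap K_\alpha$, a continuous evaluation homomorphism $\Gamma(h)=\pi_Yh\iota_0$ into $\homeo(Y)$, and Propositions~\ref{prelim:prop:tau-discrete}--\ref{prelim:prop:SS'} plus Lemma~\ref{prodcoprod:lemma:notcolim} to transfer $\tau$-discreteness back --- is the cleaner way to organize the colimit half of the argument.
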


Recall that the {\itshape support} of  a homeomorphism $h$ of a space $X$
is  $\supp h \coloneqq \operatorname{cl}_X\{x\in X \mid h(x)\neq x\}$.

\begin{Ltheorem} \label{main:thm:COproducts}
Let $Y$ be a compact Hausdorff space, $Z$ a locally compact Hausdorff space,
and $\{K_\alpha\}_{\alpha < \tau}$ a cofinal family
in $\mathscr{K}(Z)$, where $\tau$ is an infinite cardinal. Suppose further that

\begin{enumerate}[label={\rm (\Roman*)}]

\item 
$\homeo(Y)$ contains a $\tau$-discrete subset of cardinality $\tau$ that is not closed; and

\item
$\homeo_{cpt}(Z)$ contains a net $(g_\beta)_{\beta <\tau}$
of distinct elements such that $\lim g_\beta = \operatorname{id}_Z$
and $\supp g_\beta \nsubseteq K_\alpha$ whenever $\alpha < \beta$.
    
\end{enumerate}

\noindent 
Then the coproduct (disjoint union) $Y \amalg Z$ does not have CSHP.
\end{Ltheorem}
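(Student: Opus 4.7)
The strategy is to exhibit a subset $S \subseteq \homeo_{cpt}(Y \amalg Z)$ that is closed in the colimit topology but has a limit point in the topology induced from $\homeo(\beta(Y \amalg Z))$, thereby witnessing failure of CSHP. Compactness of $Y$ gives the canonical identification $\beta(Y \amalg Z) = Y \amalg \beta Z$, and each $(h,g) \in \homeo(Y) \times \homeo_{cpt}(Z)$ yields a compactly supported homeomorphism $h \amalg g$ of $Y \amalg Z$ whose extension to $\beta(Y \amalg Z)$ acts coordinatewise. Enumerate the set from (I) as $D := \{h_\alpha\}_{\alpha < \tau} \subseteq \homeo(Y)$, fix $h_\ast \in \overline{D} \setminus D$ (possible since $D$ is not closed), and with the net $(g_\beta)_{\beta < \tau}$ from (II) define
\[
S := \{\, h_\beta \amalg g_\beta : \beta < \tau\,\};
\]
note $h_\ast \amalg \operatorname{id}_Z \notin S$ because $h_\ast \neq h_\beta$ for all $\beta$.

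For the Stone-\v{C}ech part, I would verify that $h_\beta \amalg g_\beta \to h_\ast \amalg \operatorname{id}_Z$ in the compact-open topology on $\homeo(Y \amalg \beta Z)$. Coordinatewise, $h_\beta \to h_\ast$ in $\homeo(Y) = \homeo(\beta Y)$ by the choice of $h_\ast$, and $g_\beta \to \operatorname{id}_Z$ in $\homeo(\beta Z)$ by (II); these combine by examining a subbasic compact-open set $[C,U]$ after splitting $C \subseteq Y \amalg \beta Z$ into $C \cap Y$ and $C \cap \beta Z$. Consequently $h_\ast \amalg \operatorname{id}_Z$ lies in the Stone-\v{C}ech closure of $S$.

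For the colimit part, I would show $S \cap \homeo_K(Y \amalg Z)$ is closed in $\homeo_K(Y \amalg Z)$ for every compact $K \subseteq Y \amalg Z$. After replacing $K$ with $K \cup Y$ if necessary (which does not change the compact-open topology on $\homeo_K$), assume $K \supseteq Y$ and set $K_Z := K \cap Z$. Cofinality of $\{K_\alpha\}_{\alpha < \tau}$ yields $\alpha_0 < \tau$ with $K_Z \subseteq K_{\alpha_0}$, and (II) forces $\supp g_\beta \nsubseteq K_Z$ for every $\beta > \alpha_0$, so $A := \{\beta : h_\beta \amalg g_\beta \in \homeo_K(Y \amalg Z)\} \subseteq \alpha_0 + 1$ has $|A| < \tau$. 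By (I), $\{h_\beta : \beta \in A\}$ is then closed and discrete in $\homeo(Y)$. Since $Y$ and $Z$ are clopen in $Y \amalg Z$ and compacta in $Y \amalg Z$ decompose as disjoint unions of compacts in $Y$ and $Z$, any convergent net in $S \cap \homeo_K$ converges coordinatewise (pointwise limits preserve the $Y$- and $Z$-factors); its $Y$-restriction is a net in the closed discrete set $\{h_\beta : \beta \in A\}$, hence is eventually equal to some $h_{\beta_0}$, and by distinctness of the $h_\beta$ the whole net is then eventually constant equal to $h_{\beta_0} \amalg g_{\beta_0} \in S$. Thus $S$ is closed in the colimit topology while admitting $h_\ast \amalg \operatorname{id}_Z \notin S$ as a Stone-\v{C}ech limit point, so $Y \amalg Z$ fails CSHP.

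The principal obstacle I expect is the transfer of the $\tau$-discreteness of $D$ across the coordinate restriction $h \mapsto h|_Y$ on $\homeo_K(Y \amalg Z)$, combined with the cardinality estimate $|A| < \tau$; the latter is automatic from $A \subseteq \alpha_0 + 1$ with $\alpha_0 < \tau$ for any infinite cardinal $\tau$, and the former comes down to routine handling of the compact-open topology on disjoint unions together with the coordinatewise extension of homeomorphisms to $\beta Z$.
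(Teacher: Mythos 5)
Your overall architecture matches the paper's: identify $\beta(Y\amalg Z)$ with $Y\amalg\beta Z$, build $S$ from coordinatewise pairs $h_\beta\amalg g_\beta$, show $S$ meets each $\homeo_{Y\cup K_\alpha}(Y\amalg Z)$ in a set of size $<\tau$ (hence closed and discrete there), and exhibit a Stone--\v{C}ech limit point outside $S$. The colimit half of your argument is sound. The gap is in the Stone--\v{C}ech half: you assert that ``$h_\beta\to h_\ast$ in $\homeo(Y)$ by the choice of $h_\ast$.'' Choosing $h_\ast\in\overline{D}\setminus D$ only guarantees that \emph{some} net in $D$ converges to $h_\ast$; it says nothing about the particular net $(h_\beta)_{\beta<\tau}$ determined by your (arbitrary) enumeration of $D$, and for a $\tau$-discrete set $D$ this indexed net will typically not converge at all. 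The danger is concrete: for a given entourage $U$ of the diagonal of $Y$, the indices $\beta$ for which $h_\beta$ is $U$-close to $h_\ast$ could a priori all lie below the threshold $\alpha_0$ past which $g_\beta$ is $V$-close to $\operatorname{id}_Z$, so you cannot exhibit a single index $\beta$ making $h_\beta\amalg g_\beta$ close to $h_\ast\amalg\operatorname{id}_Z$.

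The missing ingredient is hypothesis (I) used a second time: since $D$ is $\tau$-discrete and $|\{h_\beta \mid \beta<\alpha_0\}|\leq|\alpha_0|<\tau$, that initial segment is closed, and since $h_\ast\notin D$ it follows that $h_\ast\in\overline{\{h_\beta\mid\beta\geq\alpha_0\}}$ for every $\alpha_0<\tau$. One then picks, for each entourage $W$ containing $U\cup V$, first $\alpha_0$ such that $(\beta g_\alpha(z),z)\in V$ for all $z\in\beta Z$ and $\alpha\geq\alpha_0$, and then $\beta_1\geq\alpha_0$ with $(h_{\beta_1}(y),h_\ast(y))\in U$ for all $y\in Y$; this yields $h_{\beta_1}\amalg g_{\beta_1}$ $W$-close to $h_\ast\amalg\operatorname{id}_Z$, hence $h_\ast\amalg\operatorname{id}_Z\in\overline{S}$. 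This interleaving step is precisely how the paper completes the argument (with $h_\ast$ normalized to $\operatorname{id}_Y$, so that the limit point is $\operatorname{id}_{Y\amalg Z}$); without it your proof does not go through.
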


In order to invoke Theorems~\ref{main:thm:products} and~\ref{main:thm:COproducts}, one needs
to ensure that $\homeo(Y)$ contains a $\tau$-discrete subset of cardinality $\tau$ that is not closed.
For spaces that are of interest to us in this paper, this is guaranteed by the next theorem.

\begin{Ltheorem} \label{main:thm:taudiscrete}
Let $\alpha$ be an infinite limit ordinal with $\tau\coloneqq\operatorname{cf}(\alpha)$, and put $Y=\alpha+1$ with the order topology.
Then $\homeo (Y)$ contains a $\tau$-discrete subset of cardinality $\tau$ that is not closed.
\end{Ltheorem}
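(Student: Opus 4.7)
\medskip

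\noindent\textbf{Plan.} I would begin by fixing a strictly increasing continuous cofinal sequence $(\beta_\xi)_{\xi<\tau}$ in $\alpha$ (a club of order type $\tau$), with enough spacing so that for each $\xi$ the clopen interval just above $\beta_\xi$ contains a copy of $\omega+1$. For each $\xi<\tau$ I would define a non-trivial homeomorphism $h_\xi\in \homeo(Y)$ supported in this clopen interval, and set $S:=\{h_\xi:\xi<\tau\}$. Distinctness of supports gives $|S|=\tau$.

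\medskip

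\noindent The first step is to show $S$ is not closed by proving $h_\xi\to \operatorname{id}$ in the compact-open topology. For any basic neighborhood $\{h:h(K)\subseteq V\}$ of $\operatorname{id}$ with $K$ compact and $V\supseteq K$ open, I would split on whether $\alpha\in K$: when $\alpha\notin K$, compact subsets of $[0,\alpha)$ are bounded (by the cofinality argument: a cofinal subset would force $\alpha$ into the closure), so $K\subseteq[0,\gamma]$ for some $\gamma<\alpha$, and the support of $h_\xi$ is disjoint from $K$ once $\beta_\xi>\gamma$; when $\alpha\in K$, $V$ contains a tail $(\gamma,\alpha]$, into which the support of $h_\xi$ falls for $\beta_\xi>\gamma$, giving $h_\xi(K)\subseteq V$. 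Since $\operatorname{id}\notin S$, this yields the non-closedness.

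\medskip

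\noindent The second step is the $\tau$-discreteness: for every $T=\{h_\xi:\xi\in I\}$ with $|I|<\tau$, $T$ must be closed in $\homeo(Y)$. The regularity of $\tau$ bounds $I$ by some $\eta<\tau$, placing all supports of members of $T$ in a compact clopen initial segment $C$ of $Y$. Thus $T$ sits inside the closed subgroup $\{h\in\homeo(Y):h|_{Y\setminus C}=\operatorname{id}\}\cong\homeo(C)$, and the problem reduces to closedness of $\{h_\xi|_C:\xi\in I\}$ there.

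\medskip

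\noindent The main obstacle is arranging the definition of $h_\xi$ so that this closedness genuinely holds. A naive choice — for instance $h_\xi$ being the transposition of two adjacent isolated ordinals $\beta_\xi+1,\beta_\xi+2$ — fails: along any cofinal sub-sequence $\xi_n\to\xi^*\leq\eta$ with $\xi^*$ a limit ordinal, the transposed pair collapses into every neighborhood of $\beta_{\xi^*}$, and the pro-finite uniformity of the zero-dimensional compact space $C$ (whose base consists of finite clopen partitions, which cannot simultaneously distinguish infinitely many small local transpositions near $\beta_{\xi^*}$) then forces $h_{\xi_n}\to\operatorname{id}$, making $\operatorname{id}$ a limit point of $T$ outside $T$ and breaking $\tau$-discreteness. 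To overcome this, each $h_\xi$ must be enriched with a ``persistent signature'' detectable by a single compact-open condition uniformly across bounded sub-families — for example by letting $h_\xi$ act on its support as the natural exchange of two order-isomorphic copies of $\omega+1$, thereby swapping two distinguishable limit ordinals and leaving a trace visible at the level of an appropriately chosen clopen partition of $C$. The remainder of the proof would consist of verifying, by case analysis on the support of any candidate limit $g\in\homeo(Y)\setminus T$, that this signature provides a single compact-open basic neighborhood of $g$ disjoint from $T$; ruling out the accumulation of $T$ at $\operatorname{id}$ (and, more generally, at any other $g\in\homeo(Y)\setminus T$) is the heart of the argument.
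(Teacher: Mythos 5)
Your first two steps (non-closedness via convergence of the $h_\xi$ to $\operatorname{id}$, and the reduction of a subfamily $T$ with $|I|<\tau$ to a compact clopen initial segment $C$) are sound, and you have correctly diagnosed why adjacent transpositions fail. But the proposed remedy does not repair the defect: it suffers from exactly the failure you describe. If $h_\xi$ is supported in a bounded clopen interval sitting just above $\beta_\xi$, then for any increasing sequence $\xi_n\to\xi^*$ with $\xi^*\leq\eta$ a limit, the \emph{entire} support of $h_{\xi_n}$ (both copies of $\omega+1$, including the two swapped limit ordinals) is eventually contained in the tail $(\gamma,\sup_n\beta_{\xi_n}]$, for every $\gamma<\sup_n\beta_{\xi_n}$. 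Since every entourage of the diagonal of the compact space $C$ contains $\Delta_C\cup\bigl((\gamma,\sup_n\beta_{\xi_n}]\times(\gamma,\sup_n\beta_{\xi_n}]\bigr)$ for some such $\gamma$, one gets $h_{\xi_n}\to\operatorname{id}$ uniformly (this is precisely Proposition~\ref{taudscrete:prop:fj} applied inside $C$). No ``signature'' carried \emph{within} the support can survive this: every finite clopen partition of $C$ has a single piece containing a whole tail below the limit point, and once the support falls into that piece the homeomorphism lies in the corresponding basic neighborhood of $\operatorname{id}$. Hence for $\tau>\omega$ any infinite subfamily of size $<\tau$ accumulates at $\operatorname{id}\notin T$, and $\tau$-discreteness fails for \emph{every} construction with pairwise disjoint, order-bounded supports.

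The paper escapes this by making the supports unbounded rather than disjoint: after reducing (via Cantor normal form and clopen embeddings) to $\alpha=\omega^\beta$, it builds $f_\delta$ acting on the whole tail $[\omega^{\beta_\delta+1},\alpha]$ by permuting the coefficient of $\omega^{\beta_\delta}$ in the representation $x=\omega^{\beta_\delta+1}\varepsilon+\omega^{\beta_\delta}m+\eta$, then truncates each $f_\delta$ to be the identity on $[0,\omega^{\beta_\delta}]$ so that the net still converges to $\operatorname{id}$. The separating invariant is \emph{external} to any bounded region: for a subfamily indexed by $C$ with $\gamma=\sup C<\tau$, all $h_\delta$ with $\delta\leq\gamma$ send the single isolated point $\omega^{\beta_\gamma+1}+1$ to pairwise distinct isolated points; a convergent net of homeomorphisms must be eventually constant at an isolated point, which forces the indices to stabilize and the subfamily to be closed. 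Some mechanism of this kind --- a fixed evaluation point, arbitrarily high up, at which all members of a small subfamily disagree --- is what your construction is missing, and it is incompatible with confining $\supp h_\xi$ to an interval below $\beta_{\xi+1}$.
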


The paper is structured as follows.  In \S\ref{sect:prelim}, we provide some preliminary results
that are used throughout the paper. In \S\ref{sect:prodcoprod}, we prove Theorems~\ref{main:thm:products} and~\ref{main:thm:COproducts},
while the proof of Theorem~\ref{main:thm:taudiscrete} is presented in \S\ref{sect:taudiscrete}. Lastly,
Theorem~\ref{main:thm:ordinals} is proven in \S\ref{sect:ordinals}.

\section{Preliminaries}

\label{sect:prelim}

Let  $\tau$ be an infinite cardinal. For a topological space $(X,\mathcal{T})$,
the subsets of $X$ of cardinality less than $\tau$ is
a directed system with respect to inclusion. We put $(X,\mathcal{T}_{<\tau}) \coloneqq \colim \{Y \mid Y \subseteq X, |Y| < \tau\}$,
where the colimit is formed in the category $\mathsf{Top}$ of
topological spaces and their continuous maps. 

\begin{proposition} \label{prelim:prop:tau-discrete}
Let $\tau$ be an infinite cardinal, and $(X,\mathcal{T})$ a topological space.
A subset $S \subseteq X$ is $\tau$-discrete in $X$ if and only if $S$ is
closed and discrete in $(X,\mathcal{T}_{<\tau})$.
\end{proposition}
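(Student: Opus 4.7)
The plan is to show that closedness in the colimit topology $\mathcal{T}_{<\tau}$ is governed by a simple ``small-trace'' criterion: by the universal property of the colimit in $\mathsf{Top}$, a set $F \subseteq X$ is closed in $\mathcal{T}_{<\tau}$ if and only if $F \cap Y$ is closed in $Y$ (with its subspace topology inherited from $\mathcal{T}$) for every $Y \subseteq X$ with $|Y| < \tau$. I would record this once at the start and then use it as the workhorse for both implications.

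For the forward direction, assume $S$ is $\tau$-discrete in $X$. To check $S$ is closed in $\mathcal{T}_{<\tau}$, pick any $Y \subseteq X$ with $|Y| < \tau$; then $S \cap Y$ is a subset of $S$ of cardinality less than $\tau$, hence closed in $(X,\mathcal{T})$ by hypothesis, and therefore closed in $Y$. To check discreteness, fix $s \in S$ and apply the same reasoning to $S \setminus \{s\}$: for each $Y$ with $|Y| < \tau$, the trace $(S \setminus \{s\}) \cap Y$ again has cardinality less than $\tau$ and so is closed in $Y$. Thus $S \setminus \{s\}$ is closed in $\mathcal{T}_{<\tau}$, making $\{s\}$ open in the subspace $S$.

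For the reverse direction, assume $S$ is closed and discrete in $(X,\mathcal{T}_{<\tau})$, and let $T \subseteq S$ with $|T| < \tau$. Because $S$ is discrete as a subspace and closed in $\mathcal{T}_{<\tau}$, every subset of $S$ is closed in $(X,\mathcal{T}_{<\tau})$; in particular so is $T$. Now I would transfer this back to the original topology $\mathcal{T}$ by the following device: for each $x \in X \setminus T$, set $Y \coloneqq T \cup \{x\}$, which has cardinality less than $\tau$ since $\tau$ is infinite. Applying the colimit criterion to the $\mathcal{T}_{<\tau}$-closed set $T$ yields that $T = T \cap Y$ is closed in $Y$, so $\{x\} = Y \setminus T$ is open in $Y$. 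Hence there exists $U \in \mathcal{T}$ with $U \cap Y = \{x\}$, i.e., a $\mathcal{T}$-neighborhood of $x$ disjoint from $T$. Since $x \in X \setminus T$ was arbitrary, $T$ is closed in $(X,\mathcal{T})$.

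The one place that requires a bit of care, and the main conceptual step, is the last paragraph: one must bridge from the (a priori strictly finer) topology $\mathcal{T}_{<\tau}$ back down to $\mathcal{T}$. The key observation is that enlarging $T$ by the single point $x$ keeps the cardinality below $\tau$, which lets one feed the set $T \cup \{x\}$ into the colimit criterion and extract a genuine $\mathcal{T}$-open separation of $x$ from $T$.
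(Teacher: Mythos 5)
Your proof is correct and follows essentially the same route as the paper: both directions hinge on the observation that a set is closed in $\mathcal{T}_{<\tau}$ iff its trace on every subset of cardinality less than $\tau$ is closed, and the reverse direction uses the same device of adjoining a single point $x$ to $T$ to stay below $\tau$. The only cosmetic difference is that you first note that every subset of a closed discrete set is closed in $(X,\mathcal{T}_{<\tau})$ and apply the trace criterion to $T$ itself, whereas the paper applies it to $S$ and splits into cases according to whether the adjoined point lies in $S$.
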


\begin{proof}
Suppose that $S\subseteq X$ is $\tau$-discrete. Then $|S \cap Y| < \tau$ for 
every $Y\subseteq X$ with $|Y| < \tau$, and thus $S\cap Y$ is closed in $X$;
in particular, $S \cap Y$ is closed $Y$. Therefore, $S$ is closed in $(X,\mathcal{T}_{<\tau})$.
Let $s_0 \in S$. Then $S\backslash \{s_0\}$ is also $\tau$-discrete, and consequently,
by the previous argument, closed in $(X,\mathcal{T}_{<\tau})$. Hence, the singleton
$\{s_0\}$ is open in $S$ in the topology induced by $(X,\mathcal{T}_{<\tau})$.
This shows that $S$ is discrete in $(X,\mathcal{T}_{<\tau})$.

Conversely, suppose that $S\subseteq X$ is closed and discrete
in $(X,\mathcal{T}_{<\tau})$. Let $A\subseteq S$ be such that
$|A| < \tau$. We show that $A$ is closed in $(X,\mathcal{T})$.
Let $y_0\in X\backslash A$, and put $Y\coloneqq A\cup \{y_0\}$. Then
$|Y|<\tau$, and so $S \cap Y$ is closed and discrete in $Y$.
If $y_0 \in S$, then $S\cap Y = Y$ is discrete,
and so $A = Y\backslash \{y_0\}$ is closed in $Y$.
If $y_0 \notin S$, then $S\cap Y =A$ is closed in $Y$.
In both cases,  $y_0 \notin \operatorname{cl}_Y A$,
and therefore $y_0 \notin \operatorname{cl}_X A$.
This shows that $A$ is closed in $X$, as desired.
\end{proof}

\begin{proposition} \label{prelim:prop:SS'}
Let $\tau$ be an infinite cardinal, $f\colon (X,\mathcal{T})\rightarrow (Y,\mathcal{T}^\prime)$ 
be a continuous map between Hausdorff spaces, 
and $S$ a subset of $X$ such that $f_{|S}$ is injective. If $f(S)$ is $\tau$-discrete in $Y$, then $S$
is $\tau$-discrete in~$X$.
\end{proposition}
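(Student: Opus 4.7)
The plan is to show directly that every subset $A \subseteq S$ with $|A| < \tau$ is closed in $X$. Since $f_{|S}$ is injective, $|f(A)| = |A| < \tau$, so by hypothesis $f(A)$ is closed in $Y$; similarly, for every $a \in A$ the set $f(A) \setminus \{f(a)\}$ has cardinality less than $\tau$ and lies in $f(S)$, hence is also closed in $Y$. I will exploit these ``closedness with one point removed'' statements together with Hausdorffness of $X$.

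To show $A$ is closed, I pick $x_0 \in \operatorname{cl}_X A$ and argue $x_0 \in A$. By continuity of $f$, $f(x_0) \in \operatorname{cl}_Y f(A) = f(A)$, so $f(x_0) = f(a)$ for some $a \in A$. If $x_0 \in S$, then injectivity of $f_{|S}$ immediately gives $x_0 = a \in A$. The interesting case is $x_0 \notin S$: here $x_0 \neq a$, so by Hausdorffness of $X$ I can choose disjoint open sets $U \ni x_0$ and $W \ni a$. Setting $V \coloneqq f^{-1}\bigl(Y \setminus (f(A) \setminus \{f(a)\})\bigr)$, which is open in $X$ and contains $x_0$ because $f(x_0) = f(a)$, the neighborhood $U \cap V$ of $x_0$ turns out to be disjoint from $A$: any $a' \in A \cap V$ satisfies $f(a') \in f(A) \cap \{f(a)\}$, hence $f(a') = f(a)$ and so $a' = a$ by injectivity, but $a \in W$ is disjoint from $U$. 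This contradicts $x_0 \in \operatorname{cl}_X A$, completing the argument.

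The only subtle point is the case $x_0 \in \operatorname{cl}_X A \setminus S$, where injectivity does not directly apply; this is where Hausdorffness of $X$ enters. The role of $Y$ being Hausdorff is indirect, through the fact that one typically expects singleton fibers or closed singletons to handle the ``pre-image collision'' $f(x_0) = f(a)$, but actually what suffices here is the $\tau$-discreteness of $f(S)$ applied to the set $f(A) \setminus \{f(a)\}$, which has cardinality strictly less than $\tau$. No other obstacle arises; injectivity and the cardinality bound propagate through $f$ cleanly, and the rest is a standard separation argument.
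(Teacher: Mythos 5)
Your proof is correct, and it takes a genuinely different route from the paper's. The paper first establishes (Proposition~\ref{prelim:prop:tau-discrete}) that $\tau$-discreteness in $(X,\mathcal{T})$ is equivalent to being closed and discrete in the auxiliary colimit topology $\mathcal{T}_{<\tau}$, then uses functoriality of the $<\tau$-construction and a net argument (a convergent net in $S$ pushes forward to a net in the discrete set $f(S)$, hence is eventually constant, hence converges inside $S$ by Hausdorffness of $X$). You instead verify the definition directly: given $A\subseteq S$ with $|A|<\tau$, you use injectivity of $f_{|S}$ to see that both $f(A)$ and $f(A)\setminus\{f(a)\}$ have cardinality below $\tau$ and are therefore closed in $Y$, and then you separate a putative limit point $x_0\notin A$ from the unique preimage candidate $a$ by intersecting a Hausdorff neighborhood with the open set $f^{-1}\bigl(Y\setminus(f(A)\setminus\{f(a)\})\bigr)$. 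All steps check out, including the case split on whether $x_0\in S$. Your argument is more elementary and self-contained: it bypasses the $\mathcal{T}_{<\tau}$ machinery and nets entirely, and it shows that only the $T_1$ property of $X$ (to exclude the single point $a$ from a neighborhood of $x_0$) and no separation hypothesis on $Y$ are actually needed. What the paper's approach buys is economy within its own framework: Proposition~\ref{prelim:prop:tau-discrete} is already established and reused elsewhere, so the reduction to ``closed and discrete in $(X,\mathcal{T}_{<\tau})$'' makes this proposition a two-line consequence of functoriality plus a standard net argument. One small remark on your closing paragraph: Hausdorffness of $Y$ plays no role at all in your argument (not even indirectly), so the sentence speculating about its ``indirect'' role could simply be dropped.
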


\begin{proof}
By Proposition~\ref{prelim:prop:tau-discrete}, it suffices to show that $S$ is closed and discrete
in $(S,\mathcal{T}_{<\tau})$. Put $S^\prime \coloneqq f(S)$. Since the $<\tau$-topology is functorial,
$f\colon (X,\mathcal{T}_{<\tau}) \rightarrow (Y,\mathcal{T}^\prime_{<\tau})$ is continuous, and
in particular, $f_{|S}\colon (S,\mathcal{T}_{<\tau}) \rightarrow (S^\prime,\mathcal{T}^\prime_{<\tau})$
is continuous and bijective. By Proposition~\ref{prelim:prop:tau-discrete}, $(S^\prime,\mathcal{T}^\prime_{<\tau})$ is 
discrete and $S^\prime$ is closed in $(Y,\mathcal{T}^\prime_{<\tau})$. Thus, $(S,\mathcal{T}_{<\tau})$ is discrete, and furthermore
\begin{align}
    f(\operatorname{cl}_{(X,\mathcal{T}_{<\tau})} S) \subseteq \operatorname{cl}_{(Y,\mathcal{T}^\prime_{<\tau})} S^\prime = S^\prime.
    \label{prelim:eq:SS'}
\end{align}
To show that $S$ is closed in $(X,\mathcal{T}_{<\tau})$, let $s_0 \in \operatorname{cl}_{(X,\mathcal{T}_{<\tau})} S$. Then
there is a net $(s_\alpha) \subseteq S$ such that $s_\alpha \xrightarrow{(X,\mathcal{T}_{<\tau})} s_0$, and so
$f(s_\alpha )\xrightarrow{(Y,\mathcal{T}^\prime_{<\tau})} f(s_0)$. By~(\ref{prelim:eq:SS'}), $f(s_0) \in S^\prime$. 
Since $S^\prime$ is discrete in 
$(Y,\mathcal{T}^\prime_{<\tau})$, the net $(f(s_\alpha))$ is eventually constant. Therefore, $(s_\alpha)$ is eventually
constant, because $f_{|S}$ is injective. Hence, $s_0 \in S$, because $(X,\mathcal{T})$ is Hausdorff, and in particular,
$(X,\mathcal{T}_{<\tau})$ is Hausdorff.
\end{proof}

The next lemma allows one to show that a space does not have CSHP by
constructing a suitable $\tau$-discrete set in its homeomorphism group.

\begin{lemma} \label{prodcoprod:lemma:notcolim}
Let $X$ be a topological space, and 
$\{X_\alpha\}_{\alpha \in \mathbb{I}}$ a directed
system of subsets of $X$ such that $X = \bigcup\limits_{\alpha \in \mathbb{I}} X_\alpha$. 
Suppose that there is an infinite cardinal $\tau$ and a subset $S\subseteq X$ such that:

\begin{enumerate}[label={\rm (\arabic*)}]

\item
$S$ is $\tau$-discrete in $X$;

\item
$|S \cap X_\alpha| < \tau$ for every $\alpha \in \mathbb{I}$; and

\item
$S$ is not closed in $X$.

\end{enumerate}
Then $X \neq \colim\limits_{\alpha \in \mathbb{I}} X_\alpha$.

\end{lemma}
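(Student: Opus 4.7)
The plan is to argue by contraposition, showing that the three hypotheses contradict $X$ being the colimit in $\mathsf{Top}$ of the directed system $\{X_\alpha\}_{\alpha \in \mathbb{I}}$. Recall that a set $F \subseteq X$ is closed in the colimit topology if and only if $F \cap X_\alpha$ is closed in $X_\alpha$ (with the subspace topology inherited from $X$) for every $\alpha \in \mathbb{I}$. So the strategy is to show that $S$ satisfies this intersection condition, yet fails to be closed in $X$.

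First, I would fix $\alpha \in \mathbb{I}$ and consider $S \cap X_\alpha$. By hypothesis~(2), this set has cardinality less than $\tau$, and since it is a subset of $S$, which is $\tau$-discrete in $X$ by~(1), the very definition of $\tau$-discreteness yields that $S \cap X_\alpha$ is closed in $X$. Intersecting with $X_\alpha$ (which is a no-op here) shows that $S \cap X_\alpha$ is closed in the subspace $X_\alpha$. Thus $S$ meets the criterion to be closed in the colimit topology.

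On the other hand, hypothesis~(3) says $S$ is not closed in the given topology on $X$. Therefore the given topology on $X$ is strictly coarser than the colimit topology on $\bigcup_\alpha X_\alpha$, and in particular $X \neq \colim_{\alpha \in \mathbb{I}} X_\alpha$.

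There is no real obstacle here: the lemma is essentially a direct unpacking of the definitions of colimit in $\mathsf{Top}$ and of $\tau$-discrete set, combined with the hypothesis that the $\tau$-discrete witness $S$ hits each $X_\alpha$ in fewer than $\tau$ points. The only small subtlety is remembering that ``closed in $X$'' implies ``closed in $X_\alpha$'' for the subspace topology, which is immediate from $X_\alpha \subseteq X$.
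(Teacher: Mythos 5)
Your argument is correct and coincides with the paper's own proof: both use hypotheses (1) and (2) to conclude that $S \cap X_\alpha$ is closed in $X$ (hence in $X_\alpha$), so that $S$ is closed in the colimit topology, while (3) says it is not closed in the given topology. Nothing further is needed.
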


\begin{proof}
Let $S\subseteq X$ be a subset with properties (1)-(3).  By (1) and (2),  $S\cap X_\alpha$ is closed in $X$ for every $\alpha \in \mathbb{I}$;
in particular, $S\cap X_\alpha$ is closed in $X_\alpha$ for every $\alpha \in \mathbb{I}$. Thus, $S$ is closed in 
$\colim\limits_{\alpha \in \mathbb{I}} X_\alpha$. By~(3), $S$ is not closed in $X$. Therefore, the two topologies are distinct.
\end{proof}

Lastly, recall that CSHP is  inherited by clopen subsets. 

\begin{flemma}[{\cite[5.3(b) and 5.6]{RDGL1}}] \label{prelim:lemma:clopen}
Let $X$ be a Tychonoff space.

\begin{enumerate}

\item
If $A\subseteq X$ is a clopen subset and $X$ has CSHP, then so does $A$.

\item
If $X$ contains an infinite
discrete clopen subset, then
$X$ does not have CSHP.

\end{enumerate}

\end{flemma}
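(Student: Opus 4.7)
The plan is to prove (a) directly and then deduce (b) from it together with an explicit computation for an infinite discrete space. The starting observation is that since $A$ is clopen in $X$, so is $B \coloneqq X\setminus A$, whence $X = A \sqcup B$ and $\beta X = \beta A \sqcup \beta B$, with $\beta A$ clopen in $\beta X$.

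For (a), I would show that both the colimit topology and the $\beta$-topology on $\homeo_{cpt}(A)$ coincide with the subspace topologies inherited from their counterparts on $\homeo_{cpt}(X)$; the CSHP hypothesis on $X$ then forces these topologies to agree on $A$ as well. Extension-by-identity on $B$ identifies $\homeo_K(A)$ with $\homeo_K(X)$ for each $K \in \mathscr{K}(A)$ (indeed, a homeomorphism of $X$ supported in $K \subseteq A$ must fix $B$), and passing to colimits over the cofinal subsystem $\mathscr{K}(A) \hookrightarrow \mathscr{K}(X)$ yields the desired topological embedding on the colimit side. On the $\beta$-side, extension-by-identity on $\beta B$ provides a topological embedding $\homeo(\beta A) \hookrightarrow \homeo(\beta X)$ in the compact-open topology since $\beta A$ is clopen in $\beta X$, and this restricts to the required embedding on $\homeo_{cpt}(A)$.

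For (b), by (a) it suffices to show that an infinite discrete space $D$ lacks CSHP. The compact subsets of $D$ are exactly the finite subsets, so each $\homeo_K(D) = \operatorname{Sym}(K)$ is finite and discrete; hence the colimit topology on $\homeo_{cpt}(D)=\operatorname{Sym}_\mathrm{fin}(D)$ is discrete. For the $\beta$-topology, I exploit that $\beta D$ is compact Hausdorff and zero-dimensional, so its unique compatible uniformity has a base of entourages $E_\mathcal{P} = \bigcup_i \overline{A_i}\times \overline{A_i}$ indexed by finite partitions $\mathcal{P} = \{A_1,\dots,A_n\}$ of $D$, giving basic neighborhoods of $\operatorname{id}$ of the form
\[
V_\mathcal{P} = \{h \in \homeo(\beta D) : h(\overline{A_i}) = \overline{A_i} \text{ for all } i \leq n\}.
\]
Since $D$ is infinite, some $A_i$ has at least two elements $a \neq b$, and the transposition $\tau_{ab}$ extends to a homeomorphism of $\beta D$ that is the identity on $D \setminus \{a,b\}$ and on $\beta D \setminus D$, so $\tau_{ab}(\overline{A_j}) = \overline{A_j}$ for every $j$. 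Thus $\tau_{ab}\in V_\mathcal{P}$ is a non-identity element, proving that $\{\operatorname{id}\}$ is not open in the $\beta$-topology, which is therefore strictly coarser than the discrete colimit topology; in particular, $D$ fails CSHP.

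The main technical obstacle is the cofinality and embedding argument in (a): one must carefully match the directed system $\{\homeo_K(X)\}_{K \in \mathscr{K}(X)}$ with the restriction of this system to $\mathscr{K}(A)$ (after identifying supports in $A$), taking care that the inclusion of colimits is a topological embedding and not merely a continuous bijection. The other steps---the $\beta$-topology subspace identification, the discreteness of the colimit topology on $\operatorname{Sym}_\mathrm{fin}(D)$, and the clopen-partition neighborhood calculation in $\homeo(\beta D)$---are essentially formal once the setup is in place.
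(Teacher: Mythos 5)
First, a point of reference: the paper does not prove this lemma at all --- it is imported verbatim from the authors' earlier work (cited as 5.3(b) and 5.6 of that paper), so your argument has to be judged on its own merits rather than against an in-paper proof. Your part (b) is correct and complete: the compact subsets of an infinite discrete $D$ are the finite ones, so the colimit topology on $\operatorname{Sym}_{\mathrm{fin}}(D)$ is discrete; the clopen-partition entourages do form a base for the unique uniformity of the zero-dimensional compactum $\beta D$; and a transposition inside a block of the partition lies in $V_{\mathcal P}$, so the identity is not isolated in the topology induced by $\homeo(\beta D)$.

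The gap is in part (a), exactly where you flag it, and the mechanism you propose for closing it is wrong: $\mathscr{K}(A)$ is \emph{not} cofinal in $\mathscr{K}(X)$ unless $X\backslash A$ is compact, since a compact set meeting $X\backslash A$ is contained in no member of $\mathscr{K}(A)$, so you cannot ``pass to colimits over the cofinal subsystem.'' More substantively, even after identifying $\homeo_K(A)$ with $\homeo_K(X)$ for $K\in\mathscr{K}(A)$, the topology a colimit induces on a subgroup is in general strictly coarser than the colimit of the induced system, so the assertion that the colimit topology on $\homeo_{cpt}(A)$ is the subspace topology from $\homeo_{cpt}(X)$ is precisely what must be proved and is not formal. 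It can be repaired as follows: the subgroup $G_0\coloneqq\{h\in\homeo_{cpt}(X)\mid h(A)=A\}$ meets each $\homeo_L(X)$ in an open subgroup (as $h(L)=L$ for $h$ supported in $L$, the condition $h(A)=A$ reduces to $h(A\cap L)=A\cap L$ with $A\cap L$ compact and $A$ open), hence $G_0$ is open in the colimit topology; an open subspace of a colimit carries the colimit of the traces; and the restriction map $\rho\colon G_0\to\homeo_{cpt}(A)$, $h\mapsto h_{|A}$, is then continuous into $\colim_{K\in\mathscr{K}(A)}\homeo_K(A)$ because it maps $G_0\cap\homeo_L(X)$ continuously into $\homeo_{L\cap A}(A)$. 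Since $\rho$ is a continuous left inverse of extension-by-identity, the latter is a topological embedding on the colimit side; combined with your (correct) identification $\beta X=\beta A\amalg\beta(X\backslash A)$ and the resulting embedding $\homeo(\beta A)\hookrightarrow\homeo(\beta X)$ on the zero-cozero side, this closes the argument.
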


\section{Products and coproducts with compact spaces}

\label{sect:prodcoprod}

In this section, we prove Theorems~\ref{main:thm:products} and~\ref{main:thm:COproducts}.
Before we prove Theorem~\ref{main:thm:products}, we need 
a technical proposition about the existence of cofinal subsets
with small down-sets.

\begin{proposition} \label{prodcoprod:prop:cofinal}
Let $(\mathbb{I},\leq)$ be a poset, and 
put $\tau\coloneqq\operatorname{cf}(\mathbb{I},\leq)$.
Then every cofinal subset of $\mathbb{I}$ contains a cofinal 
subset $J$
of cardinality $\tau$ such that
$|\{b \in J \mid b \leq a\}| < \tau$
for every $a \in J$.
\end{proposition}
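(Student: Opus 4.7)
The plan is to reduce to the case $|C|=\tau$, enumerate $C$ in order type $\tau$, and let $J$ be the ``first-occurrence'' subset consisting of those elements not dominated by any earlier element in the enumeration.

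For the reduction, I would fix (by the definition of $\tau$) a cofinal $D\subseteq\mathbb{I}$ with $|D|=\tau$, and for each $d\in D$ pick $c_d\in C$ with $c_d\geq d$ using cofinality of $C$. Then $C_0\coloneqq\{c_d:d\in D\}\subseteq C$ is cofinal in $\mathbb{I}$ and of cardinality at most $\tau$, hence exactly $\tau$ by minimality of $\tau$. Replacing $C$ by $C_0$, I may assume $|C|=\tau$ and fix a bijective enumeration $C=\{c_\alpha:\alpha<\tau\}$.

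With this enumeration in hand I would set
\[
J \coloneqq \{\, c_\alpha \in C \mid c_\alpha \not\leq c_\beta \text{ for every } \beta < \alpha \,\}.
\]
For cofinality of $J$, given $x\in\mathbb{I}$ I would take $\alpha_0$ to be the least $\alpha<\tau$ with $c_\alpha\geq x$; then $c_{\alpha_0}\in J$, because any $\beta<\alpha_0$ with $c_{\alpha_0}\leq c_\beta$ would yield $c_\beta\geq x$ and contradict minimality of $\alpha_0$. For the down-set bound, note that any $c_\gamma\in J$ with $c_\gamma\leq c_\alpha$ must satisfy $\gamma\leq\alpha$: if $\gamma>\alpha$, the defining property of $J$ applied with $\beta=\alpha$ would give $c_\gamma\not\leq c_\alpha$. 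Hence $\{b\in J\mid b\leq c_\alpha\}\subseteq\{c_\beta:\beta\leq\alpha\}$, which has cardinality at most $|\alpha|+1<\tau$, using that $\tau$ is an infinite cardinal and $\alpha<\tau$. Finally, $|J|=\tau$ follows from $|J|\leq|C|=\tau$ together with $|J|\geq\tau$ (as $J$ is cofinal).

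The only delicate point will be the preliminary reduction to $|C|=\tau$: without it the enumeration would run past $\tau$ and the key estimate $|\alpha|+1<\tau$ would break. Beyond that, the construction is essentially a standard ``first-occurrence filter'' and goes through uniformly whether $\tau$ is regular or singular, and without any directedness assumption on $\mathbb{I}$.
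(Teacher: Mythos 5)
Your proof is correct and follows essentially the same strategy as the paper's: enumerate a cofinal subset of cardinality $\tau$ and extract the subfamily of elements not dominated by any earlier one, so that within $J$ the relation $b\leq a$ forces the index of $b$ to be at most that of $a$. The only (harmless) difference is that you define $J$ by filtering against \emph{all} earlier indices rather than building it by transfinite recursion against previously \emph{selected} elements, which slightly streamlines the verification that $J$ is cofinal.
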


\begin{proof}
Let $C\subseteq \mathbb{I}$ be a cofinal subset. Without loss of generality,
we may assume that $|C|=\tau$. 
Let $C=\{c_\alpha \mid \alpha < \tau\}$ be an enumeration of $C$.
We define $\{\alpha_\gamma\}_{\gamma  <\tau}$ inductively as follows.
We put $\alpha_0 \coloneqq 0$. For $0<\gamma < \tau$, suppose that
$\alpha_\delta$ has already been defined for all $\delta < \gamma$. 
We observe that $\{c_{\alpha_\beta}\}_{\beta < \gamma}$ is not 
cofinal in $\mathbb{I}$, because its cardinality is smaller than
$\tau$. Thus, $\{\alpha < \tau \mid 
(\forall \beta < \gamma)(c_\alpha \nleq c_{\alpha_\beta})\}$ is non-empty.
Put $\alpha_\gamma \coloneqq  \min \{\alpha < \tau \mid 
(\forall \beta < \gamma)(c_\alpha \nleq c_{\alpha_\beta})\}$.

Put $J\coloneqq\{c_{\alpha_\gamma} \mid \gamma < \tau\}$.
It follows from the construction of $\{\alpha_\gamma\}_{\gamma  <\tau}$ that
\begin{align}
c_{\alpha_\gamma} \nleq c_{\alpha_\beta}
\text{ for every  } \beta < \gamma < \tau.
\end{align}
In other words,
if $c_{\alpha_\gamma} \leq c_{\alpha_\beta}$, then $\gamma \leq \beta$.
Therefore, $|\{b \in J \mid b \leq a\}| < \tau$
for every $a \in J$.

It remains to show that $J$ is cofinal in $\mathbb{I}$.
To that end, let $x \in \mathbb{I}$. Since 
$C$ is cofinal in $\mathbb{I}$,
the set $\{\beta < \tau \mid x \leq c_\beta\}$ is non-empty.
Put $\delta\coloneqq \min \{\beta < \tau \mid x \leq c_\beta\}$. It follows
from the construction of $\delta$ that 
\begin{align}
c_\delta \nleq c_\varepsilon
\text{ for every } \varepsilon < \delta.
\label{prodcoprod:eq:delta-epislon}
\end{align}
It follows from the construction of the 
$\{\alpha_\gamma\}_{\gamma  <\tau}$ that they
are strictly increasing, and in particular, $\delta \leq \alpha_\delta$.
Thus, $\{\mu <\tau \mid \delta \leq \alpha_\mu\}$ is non-empty.
Put $\gamma\coloneqq\min \{\mu <\tau \mid \delta \leq \alpha_\mu\}$. 
For every $\beta < \gamma$, one has $\alpha_\beta < \delta$, and thus,
by (\ref{prodcoprod:eq:delta-epislon}), 
$c_\delta \nleq c_{\alpha_\beta}$. Consequently,
$\delta \in \{\alpha <\tau \mid (\forall \beta < \gamma)(c_\alpha \nleq c_{\alpha_\beta})\}$. Therefore,
$\alpha_\gamma = \min \{\alpha <\tau \mid (\forall \beta < \gamma)(c_\alpha \nleq c_{\alpha_\beta})\} \leq \delta$.
Hence, $\alpha_\gamma = \delta$, and
$x \leq c_{\delta}= c_{\alpha_\gamma} \in J$.
\end{proof}

\begin{Ltheorem*}[\ref{main:thm:products}]
Let $Y$ be a compact  Hausdorff  space, $Z$ be a zero-dimensional
locally compact Hausdorff pseudocompact space that is not compact, and 
$\tau\coloneqq\operatorname{cf}(\mathscr{K}(Z),\subseteq)$.
If $\homeo(Y)$ contains a $\tau$-discrete subset of cardinality $\tau$ that is not closed,
then the product $Y \times Z$ does not have CSHP.
\end{Ltheorem*}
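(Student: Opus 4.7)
My plan is to verify the hypotheses of Lemma~\ref{prodcoprod:lemma:notcolim} for $X=\homeo_{cpt}(Y\times Z)$ equipped with the topology induced from $\homeo(\beta(Y\times Z))$ and directed system $\{\homeo_{K}(Y\times Z)\}_{K\in\mathscr{K}(Y\times Z)}$. Since $Y$ is compact and $Z$ pseudocompact, $Y\times Z$ is pseudocompact, so Glicksberg's theorem gives $\beta(Y\times Z)=Y\times\beta Z$; and compactness of $Y$ makes $\{Y\times L:L\in\mathscr{K}(Z)\}$ cofinal in $\mathscr{K}(Y\times Z)$, so $\operatorname{cf}\mathscr{K}(Y\times Z)=\tau$. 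Denoting the promised $\tau$-discrete non-closed subset of $\homeo(Y)$ by $\{h_\gamma\}_{\gamma<\tau}$ and replacing it with $\{h_\gamma h^{-1}\}$ for some $h\in\overline{\{h_\gamma\}}\setminus\{h_\gamma\}$, I may assume $\operatorname{id}_Y\in\overline{\{h_\gamma\}}\setminus\{h_\gamma\}$, so in particular $h_\gamma\neq\operatorname{id}_Y$ for every $\gamma$. Since $Z$ is zero-dimensional and locally compact, compact clopens are cofinal in $\mathscr{K}(Z)$, and Proposition~\ref{prodcoprod:prop:cofinal} yields a cofinal subfamily $\{L_\gamma\}_{\gamma<\tau}$ of compact clopen subsets of $Z$ with $|\{\beta<\tau:L_\beta\subseteq L_\gamma\}|<\tau$ for every $\gamma$. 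I then put
\[
\tilde h_\gamma(y,z):=\begin{cases}(h_\gamma(y),z)&\text{if }z\in L_\gamma,\\(y,z)&\text{otherwise,}\end{cases}
\]
which is a homeomorphism of $Y\times Z$ (continuity uses clopenness of $L_\gamma$) supported in $Y\times L_\gamma$, and set $S:=\{\tilde h_\gamma\}_{\gamma<\tau}$.

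Conditions (2) and (3) of Lemma~\ref{prodcoprod:lemma:notcolim} are straightforward. For (2): if $\tilde h_\gamma\in\homeo_{Y\times L_\delta}(Y\times Z)$, then, using $h_\gamma\neq\operatorname{id}_Y$, the identity $\supp\tilde h_\gamma=\supp h_\gamma\times L_\gamma$ forces $L_\gamma\subseteq L_\delta$, and Proposition~\ref{prodcoprod:prop:cofinal} bounds the number of such $\gamma$ by $<\tau$. For (3): because $\tilde h_\gamma$ coincides with $h_\gamma\times\operatorname{id}_{L_\gamma}$ on $Y\times L_\gamma$ and with the identity on $Y\times(\beta Z\setminus L_\gamma)$, convergence $\tilde h_{\gamma(\lambda)}\to\operatorname{id}$ in $\homeo(Y\times\beta Z)$ is equivalent to $h_{\gamma(\lambda)}\to\operatorname{id}_Y$ in $\homeo(Y)$; picking a net $h_{\gamma(\lambda)}\to\operatorname{id}_Y$ (available since $\operatorname{id}_Y\in\overline{\{h_\gamma\}}$) yields $\tilde h_{\gamma(\lambda)}\to\operatorname{id}$, while $\operatorname{id}\notin S$.

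The main obstacle is (1), the $\tau$-discreteness of $S$ in $\homeo(Y\times\beta Z)$. Fix $T=\{\tilde h_\gamma:\gamma\in D\}\subseteq S$ with $|D|<\tau$; by hypothesis $\{h_\gamma:\gamma\in D\}$ is closed and discrete in $\homeo(Y)$. Suppose $\tilde h_{\gamma(\lambda)}\to g$ in $\homeo(Y\times\beta Z)$ with $\gamma(\lambda)\in D$. At each $z\in Z$, $\tilde h_{\gamma(\lambda)}(y,z)$ takes only the values $(y,z)$ and $(h_{\gamma(\lambda)}(y),z)$; since $Y\times\beta Z$ is Hausdorff, having both $\gamma(\lambda)\in J_z$ and $\gamma(\lambda)\notin J_z$ frequently (where $J_z:=\{\gamma\in D:z\in L_\gamma\}$) is impossible, for it would force $h_{\gamma(\lambda)}\to\operatorname{id}_Y$ along a subnet, contradicting that $\{h_\gamma:\gamma\in D\}$ is closed and omits $\operatorname{id}_Y$. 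Therefore, for each $z$, $\gamma(\lambda)$ is eventually in $J_z$ or eventually in $D\setminus J_z$. If some $z$ lies in the first case, restricting the convergence to $Y\times\{z\}$ (and applying the analogous argument to the inverses) upgrades to $h_{\gamma(\lambda)}\to h^*$ in $\homeo(Y)$ for some $h^*$; closed-discreteness of $\{h_\gamma:\gamma\in D\}$ then forces the net to be eventually constant, whence $g=\tilde h_{\gamma^*}\in T$ for the unique $\gamma^*\in D$ with $h_{\gamma^*}=h^*$. If every $z$ lies in the second case, $g$ is the identity on the dense set $Y\times Z$, hence $g=\operatorname{id}$, and uniform convergence to $\operatorname{id}$ again forces $h_{\gamma(\lambda)}\to\operatorname{id}_Y$, the same contradiction. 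Thus $T$ is closed, all hypotheses of Lemma~\ref{prodcoprod:lemma:notcolim} hold, and the conclusion follows.
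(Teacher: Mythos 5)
Your proposal is correct and follows the paper's skeleton exactly: the same reduction via Lemma~\ref{prodcoprod:lemma:notcolim}, Glicksberg's theorem to identify $\beta(Y\times Z)$ with $Y\times\beta Z$, Proposition~\ref{prodcoprod:prop:cofinal} to get a cofinal family of (clopen) compacta with small down-sets, and the same homeomorphisms $\tilde h_\gamma$ acting by $h_\gamma$ over $L_\gamma$ and trivially elsewhere; conditions (2) and (3) are verified just as in the paper. The one place you genuinely diverge is the verification of $\tau$-discreteness. The paper normalizes the cofinal family so that $\bigcap_{\gamma<\tau}L_\gamma\neq\emptyset$, fixes $z_0$ in the intersection, and obtains a single continuous homomorphism $\Gamma\colon h\mapsto\pi_Y h(\cdot,z_0)$ from the fiber-preserving subgroup to $\homeo(Y)$ with $\Gamma(\tilde h_\gamma)=h_\gamma$, after which Proposition~\ref{prelim:prop:SS'} pulls $\tau$-discreteness back in one stroke. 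You instead run a direct net argument with the dichotomy over $z\in Z$ (eventually in $J_z$ versus eventually outside), which works — the excluded ``both frequently'' case and the all-$z$-trivial case each force $h_{\gamma(\lambda)}\to\operatorname{id}_Y$ against closedness of $\{h_\gamma:\gamma\in D\}$, and the remaining case localizes to a fiber $Y\times\{z_0\}$ where closed-discreteness forces eventual constancy. Your route avoids the ``common point'' normalization at the cost of a longer case analysis; the paper's evaluation-homomorphism trick is the cleaner packaging of the same idea (evaluate on one fiber that all supports contain). Two trivial points to tidy: you should note that the $L_\gamma$ may be taken nonempty (needed for $\tilde h_\gamma\neq\operatorname{id}$ and for extracting $h_{\gamma(\lambda)}\to\operatorname{id}_Y$ from uniform convergence in Case B), and condition (2) of the lemma for an arbitrary $K\in\mathscr{K}(Y\times Z)$ follows from the case $K=Y\times L_\delta$ by cofinality.
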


\begin{proof}
Since $Y$ is compact and $Z$ is pseudocompact, the product
$Y\times Z$ is also pseudocompact (\cite[3.10.27]{Engel}), and by Glicksberg's Theorem (\cite[Theorem 1]{Glick2}),
$\beta(Y\times Z)\cong Y \times \beta Z$.

Let $\{C_\alpha\}_{\alpha < \tau}$ be a cofinal family in 
$(\mathscr{K}(Z),\subseteq)$. Without loss
of generality, we may assume that each $C_\alpha$ is open in $Z$,
and $\bigcap\limits_{\alpha< \tau} C_\alpha \neq \emptyset$.
Using Proposition~\ref{prodcoprod:prop:cofinal}, one may pick
a cofinal subfamily $\{K_\alpha\}_{\alpha < \tau}$ of $\{C_\alpha\}_{\alpha < \tau}$
such that 
\begin{align}
|\{\beta | K_\beta \subseteq K_\alpha\}| < \tau \text{ for every }
\alpha < \tau.  \label{eq:small-down}
\end{align}
Since $Y$ is compact, the family 
$\{Y\times K_\alpha\}_{\alpha < \tau}$
is cofinal in $(\mathscr{K}(Y \times Z),\subseteq)$; in 
particular, it is directed.

Put $G \coloneqq \homeo_{cpt}(Y \times Z)$ and 
$G_\alpha \coloneqq \homeo_{Y \times  K_\alpha} (Y \times Z)$.
We construct a subset $S \subseteq G$ that satisfies
the conditions of Lemma~\ref{prodcoprod:lemma:notcolim}:

\begin{enumerate}[label=(\arabic*)]

\item
$S$ is $\tau$-discrete in $G$;

\item
$|S \cap G_\alpha| < \tau$ for all $\alpha < \tau$; and

\item
$\operatorname{id}_{Y\times Z} \in \overline{S} \backslash S$.

\end{enumerate}

\noindent
This will show that $G \neq \colim\limits_{\alpha < \tau} G_\alpha$, 
and thus $Y \times Z$ does not have CSHP.

Let $S^\prime\subseteq \homeo(Y)$ be a $\tau$-discrete subset such that
$|S^\prime|=\tau$ and $S^\prime$ is not closed. Without loss of generality, we may
assume that $\operatorname{id}_Y \in \overline{S^\prime} \backslash S^\prime$. 
Let $S^\prime=\{f_\alpha \mid \alpha < \tau\}$ be an injective enumeration of~$S^\prime$.
For $\alpha <\tau$, put
\begin{align}
h_\alpha \colon Y\times Z & \longrightarrow Y\times Z \nonumber \\
(y,z) &  \longmapsto  \begin{cases}
(f_\alpha(y),z) & z \in K_\alpha \\
(y,z) & z \notin K_\alpha.
\end{cases}
\end{align}
Since $h_\alpha$ is a homeomorphism on the clopen set
$Y\times K_\alpha$, and $h_\alpha$ is the identity on 
the clopen set $Y\times (Z\backslash K_\alpha)$,
one has $h_\alpha \in G$ in for every $\alpha < \tau$.

Put $S \coloneqq \{h_\alpha \mid \alpha < \tau\}$.
We verify that $S$ satisfies properties (1), (2), and (3).

(1) Let $\pi_Y\colon  Y\times Z \rightarrow Y$ and
$\pi_Z\colon  Y\times Z \rightarrow Z$ denote the respective projections, and
put 
\begin{align}
H\coloneqq\{ h \in G \mid \pi_Z h = \pi_Z\}. 
\end{align}
Since  $H$  is a closed subgroup of $G$,
it suffices to show that $S$ is $\tau$-discrete in $H$. 
Fix $z_0 \in \bigcap\limits_{\alpha < \tau} K_\alpha$, 
and define $\iota_{0}\colon Y \rightarrow Y \times Z$
by $\iota_{0}(y)=(y,z_0)$. The composite
\begin{align}
\mathscr{C}(Y \times \beta Z, Y \times \beta Z) 
\to^{\mathscr{C}(\beta \iota_{0}, Y \times \beta Z)}
\mathscr{C}(Y, Y \times \beta Z) 
\to^{\mathscr{C}(Y,\pi_Y)}
\mathscr{C}(Y,Y)
\end{align}
is continuous (\cite[3.4.2]{Engel}), where the function spaces are equipped with
the compact-open topology. Thus, its restriction to $H$,
\begin{align}
\Gamma\colon H  & \longrightarrow \homeo(Y) \nonumber \\
h  & \longmapsto \pi_Y h \iota_{0}
\end{align}
is a continuous group homomorphism. The restriction $\Gamma_{|S}$ is injective (because
$\Gamma(h_\alpha) = f_\alpha$), 
and $\Gamma(S)=S^\prime$ is $\tau$-discrete in $\homeo(Y)$. Therefore, by
Proposition~\ref{prelim:prop:SS'}, $S$ is $\tau$-discrete in $H$.

(2) For $\beta < \tau$, 
$h_\beta \in S \cap G_\alpha$ if and only if 
$(\supp f_\beta) \times K_\beta = \supp h_\beta \subseteq Y \times K_\alpha$,
or equivalently,  $K_\beta \subseteq K_\alpha$
($\supp f_\beta \neq \emptyset$
because $\operatorname{id}_Y \notin S$).
Therefore, by (\ref{eq:small-down}),
\begin{align}
|S \cap G_\alpha| = |\{\beta \mid K_\beta \subseteq K_\alpha\}| < \tau.
\end{align}

(3) Since $f_\alpha \neq \operatorname{id}_Y$ for every
$\alpha <\tau$, it follows that
$h_\alpha \neq \operatorname{id}_{Y\times Z}$, and thus
$\operatorname{id}_{Y\times Z} \notin S$. It remains
to show that $\operatorname{id}_{Y\times Z} \in \bar{S}$.
To that end, let $W$ be an entourage of the diagonal in $(Y \times \beta Z)^2$.
Then 
\begin{align}
\{(u_1,v_1,u_2,v_2) \mid (u_1,u_2) \in U, (v_1,v_2) \in V\}    \subseteq W
\end{align}
for some entourage $U$ of the diagonal in $Y\times Y$
and entourage $V$ of the diagonal in $\beta Z \times \beta Z$
(\cite[8.2.1]{Engel}).
Since $\operatorname{id}_Y \in \overline{S^\prime}$, there is $\gamma < \tau$ such that 
$(y,f_\gamma(y)) \in U$ for every $y \in Y$. Therefore,
$(y,z,\beta h_\gamma(y,z)) \in W$ for every $(y,z) \in Y \times \beta Z$.
Hence, $\operatorname{id}_{Y\times Z} \in \bar{S}$.
\end{proof}

\begin{Ltheorem*}[\ref{main:thm:COproducts}]
Let $Y$ be a compact Hausdorff space, $Z$ a locally compact Hausdorff space,
and $\{K_\alpha\}_{\alpha < \tau}$ a cofinal family
in $\mathscr{K}(Z)$, where $\tau$ is an infinite cardinal. Suppose further that

\begin{enumerate}[label={\rm (\Roman*)}]

\item 
$\homeo(Y)$ contains a $\tau$-discrete subset of cardinality $\tau$ that is not closed; and

\item
$\homeo_{cpt}(Z)$ contains a net $(g_\beta)_{\beta <\tau}$
of distinct elements such that $\lim g_\beta = \operatorname{id}_Z$
and $\supp g_\beta \nsubseteq K_\alpha$ whenever $\alpha < \beta$.
    
\end{enumerate}

\noindent 
Then the coproduct (disjoint union) $Y \amalg Z$ does not have CSHP.
\end{Ltheorem*}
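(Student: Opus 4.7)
The plan is to apply Lemma~\ref{prodcoprod:lemma:notcolim} to $X \coloneqq \homeo_{cpt}(Y \amalg Z)$ equipped with the topology induced from $\homeo(\beta(Y \amalg Z))$. Since $Y$ is compact, $\beta(Y\amalg Z) = Y \amalg \beta Z$, and the family $X_\alpha \coloneqq \homeo_{Y\cup K_\alpha}(Y\amalg Z)$ is cofinal in $\mathscr{K}(Y \amalg Z)$. Producing $S\subseteq X$ satisfying (1)--(3) of that lemma will show that the $\beta$-topology on $X$ differs from the colimit topology, and hence that $Y \amalg Z$ fails CSHP.

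To construct $S$, extract an injectively enumerated set $S'=\{f_\alpha\mid\alpha<\tau\}$ from hypothesis~(I). After left-translating by a suitable element of $\homeo(Y)$ (which preserves $\tau$-discreteness and non-closedness, being a group homeomorphism), I may assume $\operatorname{id}_Y\in\overline{S'}\setminus S'$. Define $h_\alpha \in X$ to act as $f_\alpha$ on the clopen set $Y$ and as $g_\alpha$ on the clopen set $Z$, and set $S\coloneqq\{h_\alpha\mid\alpha<\tau\}$. Since $\supp h_\alpha = \supp f_\alpha \cup \supp g_\alpha \subseteq Y \cup \supp g_\alpha$, and hypothesis~(II) forbids $\supp g_\alpha\subseteq K_\gamma$ whenever $\gamma<\alpha$, one has $h_\alpha \in X_\gamma$ only if $\alpha\leq\gamma$, giving $|S\cap X_\gamma|\leq|\gamma+1|<\tau$; this is condition~(2). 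For condition~(1), let $H\leq X$ denote the subgroup of those $h$ with $h(Y)=Y$; it is closed in $X$ since, for each $y\in Y$, the set $\{h\in X\mid h(y)\in Y\}$ is clopen (as $Y$ is clopen in $Y\amalg\beta Z$ and evaluation is continuous), and similarly for $h^{-1}$. The restriction $\rho\colon H\to\homeo(Y)$, $h\mapsto h|_Y$, is a continuous group homomorphism (pullback of a sub-basic compact-open set $\{f\mid f(A)\subseteq B\}$ with $A\subseteq Y$ compact and $B\subseteq Y$ open is visibly $\beta$-open in $H$), and $\rho|_S$ is injective with image $S'$. Proposition~\ref{prelim:prop:SS'} then yields that $S$ is $\tau$-discrete in $H$, and hence in $X$.

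The main obstacle is condition~(3). Non-membership $\operatorname{id}\notin S$ is immediate from $f_\alpha\neq\operatorname{id}_Y$. To show $\operatorname{id}\in\overline{S}$, I take an arbitrary entourage $W$ of the diagonal in $(Y\amalg\beta Z)^2$; as $Y$ and $\beta Z$ are clopen components, $W$ may be refined to an entourage of the form $W_Y\cup W_Z$, where $W_Y$ and $W_Z$ are entourages of $Y$ and $\beta Z$, respectively. By hypothesis~(II) (convergence in the $\beta Z$-topology), there is $\alpha_0<\tau$ such that $(\beta g_\alpha(z),z)\in W_Z$ for all $\alpha\geq\alpha_0$ and $z\in\beta Z$. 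The delicate point is that~(I) supplies $\operatorname{id}_Y$ only as a limit point of $S'$, whereas I need an index $\alpha\geq\alpha_0$ at which $f_\alpha$ is simultaneously $W_Y$-close to $\operatorname{id}_Y$. This is resolved via $\tau$-discreteness: if some neighbourhood $U$ of $\operatorname{id}_Y$ satisfied $U\cap S'\subseteq\{f_\alpha\mid\alpha<\alpha_0\}$, then $U\cap S'$ would be a subset of $S'$ of cardinality less than $\tau$, hence closed in $\homeo(Y)$; since $\operatorname{id}_Y\notin S'$, it could then be separated from $S'$ by a smaller neighbourhood, contradicting $\operatorname{id}_Y\in\overline{S'}$. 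Consequently $\operatorname{id}_Y\in\overline{\{f_\alpha\mid\alpha\geq\alpha_0\}}$, and some $\alpha\geq\alpha_0$ gives $(f_\alpha(y),y)\in W_Y$ for all $y\in Y$ together with $(\beta g_\alpha(z),z)\in W_Z$ for all $z\in\beta Z$, so that $(\beta h_\alpha(x),x)\in W$ for every $x\in Y\amalg\beta Z$. This completes the verification.
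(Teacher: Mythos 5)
Your proposal is correct and follows essentially the same route as the paper's proof: the same reduction via Lemma~\ref{prodcoprod:lemma:notcolim} with $G_\alpha=\homeo_{Y\cup K_\alpha}(Y\amalg Z)$, the same elements $h_\alpha=f_\alpha\amalg g_\alpha$, the same closed subgroup $H=\{h\mid h(Y)=Y\}$ with restriction to $\homeo(Y)$ feeding into Proposition~\ref{prelim:prop:SS'}, and the same use of $\tau$-discreteness to discard $\{f_\alpha\mid\alpha<\alpha_0\}$ when verifying $\operatorname{id}\in\overline{S}$. The only differences are cosmetic (your explicit translation argument for the WLOG step, and proving $H$ closed via evaluation maps rather than noting it is an open subgroup).
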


\begin{proof}
Since $Y$ is compact, one has $\beta (Y \amalg Z) = Y  \amalg \beta Z$.
The family $\{Y\cup K_\alpha\}_{\alpha <\tau}$ is cofinal in
$(\mathscr{K}(Y \amalg Z),\subseteq)$; in particular, it is directed.

Put $G\coloneqq \homeo_{cpt} (Y \amalg Z)$ and $G_\alpha \coloneqq \homeo_{Y \cup K_\alpha} (Y \amalg Z)$.
We construct a subset $S \subseteq G$ that satisfies
the conditions of Lemma~\ref{prodcoprod:lemma:notcolim}:

\begin{enumerate}[label=(\arabic*)]

\item
$S$ is $\tau$-discrete in $G$;

\item
$|S \cap G_\alpha| < \tau$ for all $\alpha < \tau$; and

\item
$\operatorname{id}_{Y\amalg Z} \in \overline{S} \backslash S$.

\end{enumerate}

\noindent
This will show that $G \neq \colim\limits_{\alpha < \tau} G_\alpha$, 
and thus $Y \amalg Z$ does not have CSHP.

Let $S^\prime\subseteq \homeo(Y)$ be a $\tau$-discrete subset such that
$|S^\prime|=\tau$ and $S^\prime$ is not closed. Without loss of generality, we may
assume that $\operatorname{id}_Y \in \overline{S^\prime} \backslash S^\prime$. 
Let $S^\prime=\{f_\alpha \mid \alpha < \tau\}$ be an injective enumeration of~$S^\prime$.
For $\alpha <\tau$, put $h_\alpha\coloneqq f_\alpha \amalg g_\alpha$. Clearly,
$h_\alpha \in G$, because  $Y$ and $Z$ are clopen subsets of $Y\amalg Z$.

Put $S \coloneqq \{h_\alpha \mid \alpha < \tau\}$.
We verify that $S$ satisfies properties (1), (2), and (3).

(1) Put $H\coloneqq \{h \in G \mid h(Y)=Y\}$. Since $Y$ is a compact open subset of $Y\amalg Z$,
the subgroup $H$ is open (and in particular, closed) in $G$, and so it suffices
to show that $S$ is $\tau$-discrete in $H$. Let $\iota_Y\colon Y \rightarrow Y \amalg Z$
denote the canonical embedding. The composite
\begin{align}
\mathscr{C}(Y \amalg \beta Z,Y\amalg \beta Z) 
\to^{\mathscr{C}(\beta \iota_Y,Y\amalg \beta Z)}
\mathscr{C}(Y,Y\amalg \beta Z)
\end{align}
is continuous, where the function spaces are equipped with
the compact-open topology. Thus, its restriction to $H$ and co-restriction
to $\homeo(Y) \subseteq \mathscr{C}(Y,Y\amalg \beta Z)$,
\begin{align}
    \Gamma\colon H &\longrightarrow \homeo(Y) \nonumber \\
    h & \longmapsto h_{|Y}
\end{align}
is a continuous group homomorphism. The restriction $\Gamma_{|S}$ is injective
(because $\Gamma(h_\alpha) = f_\alpha$), and $\Gamma(S)=S^\prime$ 
is $\tau$-discrete in $\homeo(Y)$. Therefore, by
Proposition~\ref{prelim:prop:SS'}, $S$ is $\tau$-discrete in $H$.

(2) For $\beta < \tau$, $h_\beta \in S \cap G_\alpha$ if and only if $(\supp f_\beta) \cup (\supp g_\beta) = \supp h_\beta \subseteq Y \cup K_\alpha$,
or equivalently, $\supp g_\beta \subseteq K_\alpha$. By the assumptions on $(g_\beta)_{\beta <\tau}$, the latter is possible
only if $\beta \leq \alpha$. Therefore,
\begin{align}
|S \cap G_\alpha | = | \{\beta \mid \supp g_\beta \subseteq K_\alpha\}| \leq |\alpha| < \tau.
\end{align}

(3) Since $f_\alpha \neq \operatorname{id}_Y$ for every $\alpha < \tau$, it follows that $h_\alpha \neq \operatorname{id}_{Y \amalg Z}$,
and thus $\operatorname{id}_{Y \amalg Z} \notin S$. It remains to show that
$\operatorname{id}_{Y \amalg Z} \in \bar S$. To that end, let $W$ be an entourage of the diagonal in $(Y \amalg \beta Z)^2$.  Then
there is an entourage $U$ of the diagonal in $Y\times Y$ and an entourage $V$ of the diagonal in $\beta Z \times \beta Z$ such that
$U \cup V \subseteq W$. Since $\lim g_\beta = \operatorname{id}_Z$, 
there is $\alpha_0 < \tau$ such that for $(\beta g_\alpha(z),z) \in V$ for every $z\in \beta Z$ and $\alpha \geq \alpha_0$.
One has
\begin{align}
|\{f_\alpha \mid \alpha < \alpha_0\}| \leq |\alpha_0| < \tau, 
\end{align}
and thus $\{f_\alpha \mid \alpha < \alpha_0\}$ is closed, because $S^\prime$ is $\tau$-discrete. Therefore,
\begin{align}
    \operatorname{id}_Y \in \overline{S^\prime\backslash \{f_\alpha \mid \alpha < \alpha_0\}} = 
    \overline{ \{f_\alpha \mid \alpha \geq \alpha_0 \}}.
\end{align}
In particular, there is $\alpha_1 \geq \alpha_0$ such that $(f_{\alpha_1}(y),y) \in U$ for every $y \in Y$. Hence,
\begin{align}
(\beta h_{\alpha_1}(x),x) = ((f_{\alpha_1} \amalg g_{\alpha_1})(x),x)  \in U \cup V \subseteq W
\end{align}
for every $x\in Y \amalg \beta Z$, as desired. 
\end{proof}

\section{Construction of $\tau$-discrete subsets}

\label{sect:taudiscrete}

\begin{Ltheorem*}[{\ref{main:thm:taudiscrete}}]
Let $\alpha$ be an infinite limit ordinal with $\tau\coloneqq\operatorname{cf}(\alpha)$, and put $Y\coloneqq \alpha+1$ with the order topology.
Then $\homeo (Y)$ contains a $\tau$-discrete subset of cardinality $\tau$ that is not closed.
\end{Ltheorem*}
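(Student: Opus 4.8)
The plan is to build the $\tau$-discrete set explicitly from a cofinal chain in $\alpha$, exploiting that $\tau=\operatorname{cf}(\alpha)$ is a regular cardinal. First I fix a strictly increasing continuous sequence $(\alpha_\xi)_{\xi<\tau}$ with $\alpha_0=0$ and $\sup_{\xi<\tau}\alpha_\xi=\alpha$; after thinning I may assume each gap contains at least two isolated (successor) points $p_\xi<q_\xi$ of $Y$. Every initial segment $[0,\gamma]$ with $\gamma<\alpha$ is clopen in $Y$, so I have an abundant supply of continuous $\{0,1\}$-valued functions $\chi_{[0,\gamma]}$ and corresponding uniform pseudometrics on the compact group $\homeo(Y)$, whose topology is that of uniform convergence. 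I would attach to each $\xi<\tau$ a homeomorphism $h_\xi$ supported on the co-bounded clopen set above $\alpha_\xi$ — for instance a product of transpositions of the pairs $(p_\eta,q_\eta)$ over a tail of indices, or a suitable variant — chosen so that for any fixed continuous pseudometric $d$ on $Y$ all but $<\tau$ of the $h_\xi$ move points only inside a neighbourhood of $\alpha$ on which $d$ is constant. Set $S:=\{h_\xi\mid \xi<\tau\}$.

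Non-closedness is the easy half. Since a pseudometric $d$ on the compact space $Y$ is eventually constant on a final segment $(\gamma_d,\alpha]$, and since $\{\xi\mid \alpha_\xi\le\gamma_d\}$ is bounded in $\tau$ by regularity, all but a bounded set of the $h_\xi$ have $d$-displacement $0$. Hence $S$ accumulates at a limiting homeomorphism $h^{\ast}\notin S$ (the ``total'' rearrangement, e.g.\ $\operatorname{id}_Y$ or the full product of transpositions), so $S$ is not closed; a right translation then arranges $h^{\ast}=\operatorname{id}_Y$, as used in the applications.

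For $\tau$-discreteness I must show that every $A=\{h_\xi\mid \xi\in I\}\subseteq S$ with $|I|<\tau$ is closed in $\homeo(Y)$. Regularity of $\tau$ enters again: such an $I$ is bounded, $\sup I=\xi^{\ast}<\tau$, so every $h_\xi\in A$ is active only below the fixed height $\beta:=\alpha_{\xi^{\ast}}$ together with the common tail above it. The aim is to produce, for each such $A$, a single continuous pseudometric witnessing that $A$ is uniformly discrete (hence closed): concretely a clopen $[0,\gamma]$ crossed by every $h_\xi$ with $\xi\in I$, so that $\chi_{[0,\gamma]}$ separates each $h_\xi$ from the identity, refined to separate the $h_\xi$ from one another. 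Equivalently I would apply Proposition~\ref{prelim:prop:SS'} to a continuous map $\homeo(Y)\to\prod_\eta\{0,1\}$, $h\mapsto(\text{side of }h(p_\eta))_\eta$, injective on $S$, and show it carries $A$ to a closed discrete set. The structural fact I intend to exploit is that a homeomorphism of $Y$ cannot send an isolated point to a limit ordinal; this rigidity is what should forbid a bounded subfamily from converging to a spurious limit.

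The main obstacle is exactly this last step. The naive choice of $h_\xi$ as a transposition localized near $\alpha_\xi$ fails: if the scales $\alpha_\xi$ ($\xi\in I$) accumulate at an intermediate limit ordinal $\beta<\alpha$, then the transposed pairs lie in ever smaller neighbourhoods of $\beta$, every continuous pseudometric is eventually small there, and the bounded family $A$ would converge — to $\operatorname{id}_Y$ or to an ``initial-segment'' homeomorphism outside $A$ — destroying $\tau$-discreteness. Thus the two demands pull against each other: global accumulation forces the distinguishing features of $h_\xi$ to escape to the top of $Y$, whereas closedness of bounded subfamilies forces those features to stay topologically visible at bounded height. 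Reconciling them is the crux; my expectation is that it is achieved by making the pairs moved by $h_\xi$ straddle a cofinal family of clopen initial segments (so that $h_\xi$ preserves no $[0,\gamma]$ with $\gamma\ge\alpha_\xi$), by using regularity of $\tau$ to keep, for each bounded $I$, a common separating level above $\beta$, and by invoking the isolated-to-limit rigidity to kill the residual limits. Checking that one and the same construction yields both non-closedness and closedness of every $<\tau$ subset is where the real difficulty lies.
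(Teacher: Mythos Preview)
You have correctly isolated the difficulty, but the proposal stops short of resolving it, and the fixes you sketch do not get around the obstacle you yourself describe. Both candidate families you mention---a single transposition near $\alpha_\xi$, and a tail product of transpositions $(p_\eta,q_\eta)$ for $\eta\ge\xi$---fail $\tau$-discreteness for exactly the reason you state: if $I\subseteq\tau$ is bounded but contains a sequence $\xi_j\nearrow\zeta$ with $\zeta\notin I$, then the moved pairs with index in $[\xi_j,\zeta)$ lie in arbitrarily small neighbourhoods of the limit point $\alpha_\zeta$, and so $h_{\xi_j}\to h_\zeta\notin\{h_\xi:\xi\in I\}$. Your suggested remedy of making $h_\xi$ ``straddle'' every clopen segment $[0,\gamma]$ with $\gamma\ge\alpha_\xi$ does force $h_\xi\ne\operatorname{id}$ in a way visible at each level, but it does nothing to separate two such $h_{\xi_1},h_{\xi_2}$ from each other at a \emph{common} witness, which is what closedness of bounded subfamilies requires. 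The proposal does not supply a concrete construction with this property.

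The paper's solution is to abandon transpositions of isolated points and instead permute entire blocks. One first reduces (via Cantor normal form) to $\alpha=\omega^\beta$ with $\operatorname{cf}(\beta)=\tau$, and fixes a cofinal $(\beta_\delta)_{\delta<\tau}$ in $\beta$. For each $\delta$ one chooses a bijection $\varphi_\delta\colon\omega\to\omega$ with $\varphi_\delta(0)\ne 0$ and defines $f_\delta$ to permute the $\omega$-many blocks of length $\omega^{\beta_\delta}$ inside every interval of length $\omega^{\beta_\delta+1}$ according to $\varphi_\delta$. After a shift that makes $f_\delta$ the identity on $[0,\omega^{\beta_\delta}]$ (so $h_\delta\to\operatorname{id}_Y$ as before), the crucial feature is this: for any $\gamma<\tau$, the single isolated point $\omega^{\beta_\gamma+1}+1$ is sent by $h_\delta$ (for every $\delta\le\gamma$) to $\omega^{\beta_\gamma+1}+\omega^{\beta_\delta+1}\varphi_\delta(0)+1$, and these values are pairwise distinct as $\delta$ ranges over $[0,\gamma]$. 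Thus for a bounded $I$ with $\gamma=\sup I$, evaluation at the fixed isolated point $\omega^{\beta_\gamma+1}+1$ separates all $h_\delta$, $\delta\in I$, simultaneously; any convergent net in $\{h_\delta:\delta\in I\}$ must be eventually constant, giving $\tau$-discreteness. The point you were missing is precisely this: a witness that depends only on the bound $\gamma$, not on the individual index $\delta$.
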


The proof of Theorem~\ref{main:thm:taudiscrete} is broken down into several lemmas. 
First, the special case where the ordinal has countable cofinality is proven. Then,
the theorem is reduced to the case where $\alpha=\omega^\beta$ for an infinite limit ordinal $\beta$.
(Here, and throughout this paper, $\omega^\beta$ means ordinal exponentiation, not cardinal exponentiation.)

\begin{proposition} \label{taudscrete:prop:fj}
Let $\alpha$ be an infinite limit ordinal,  and put $Y\coloneqq\alpha+1$ with the order topology. Suppose that
$\{f_j\}_{j\in\mathbb{J}} \subseteq \homeo(Y)$ is a net satisfying that for every $\xi < \alpha$ there is 
$j_0 \in \mathbb{J}$ such that $f_{j|[0,\xi]} = \operatorname{id}_{[0,\xi]}$ for every $j \geq j_0$. Then
$\lim f_j = \operatorname{id}_Y$ in $\homeo(Y)$.
\end{proposition}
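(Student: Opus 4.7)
The plan is to establish uniform convergence. Since $Y=\alpha+1$ is compact Hausdorff (its maximum element $\alpha$ witnesses compactness), the compact-open topology on $\homeo(Y)$ coincides with the topology of uniform convergence relative to the unique uniformity on $Y$ compatible with its topology. Thus $\lim f_j=\operatorname{id}_Y$ is equivalent to the assertion that for every open neighborhood $W$ of the diagonal $\Delta_Y$ in $Y\times Y$, there is $j_0\in\mathbb{J}$ such that $(f_j(y),y)\in W$ for every $y\in Y$ and every $j\geq j_0$.

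Given such a $W$, compactness of $Y$ yields a finite open cover $U_1,\ldots,U_n$ of $Y$ satisfying $\bigcup_{i=1}^n U_i\times U_i\subseteq W$: for each $y\in Y$ one picks an open neighborhood $U_y$ with $U_y\times U_y\subseteq W$, then passes to a finite subcover. Choose $i_0$ with $\alpha\in U_{i_0}$. Since $U_{i_0}$ is an open neighborhood of $\alpha$ in the order topology, there is $\xi<\alpha$ such that $(\xi,\alpha]\subseteq U_{i_0}$.

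By the hypothesis applied to this $\xi$, pick $j_0\in\mathbb{J}$ such that $f_{j|[0,\xi]}=\operatorname{id}_{[0,\xi]}$ for every $j\geq j_0$. Fix such a $j$ and let $y\in Y$. If $y\leq \xi$, then $f_j(y)=y$, so $(f_j(y),y)\in\Delta_Y\subseteq W$. If $y>\xi$, then since $f_j$ is a bijection of $Y$ that fixes $[0,\xi]$ pointwise, one has $f_j(y)\in Y\backslash [0,\xi]=(\xi,\alpha]\subseteq U_{i_0}$; combined with $y\in(\xi,\alpha]\subseteq U_{i_0}$, this gives $(f_j(y),y)\in U_{i_0}\times U_{i_0}\subseteq W$, as required.

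No serious obstacle arises. The argument rests on two elementary ingredients: every open neighborhood of the diagonal in a compact Hausdorff space is refined by a finite open cover of squares, and a bijection fixing an initial segment of $Y$ pointwise must permute its complement. The one point worth highlighting is the role of the maximum $\alpha$ of $Y$: because its neighborhood base in the order topology consists of intervals $(\xi,\alpha]$, the one block $U_{i_0}$ of the cover that contains $\alpha$ can be forced to swallow an entire tail $(\xi,\alpha]$, after which the hypothesis does the remaining work.
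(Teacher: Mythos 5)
Your argument is correct and is essentially the paper's proof: both reduce to uniform convergence, extract a tail box $(\xi,\alpha]\times(\xi,\alpha]$ from the neighborhood of the diagonal at the point $(\alpha,\alpha)$, and use that a bijection fixing $[0,\xi]$ pointwise must map $(\xi,\alpha]$ onto itself. Your detour through a finite cover by squares is harmless but unnecessary, since only the one member containing $\alpha$ and the fact that $W$ contains the diagonal are ever used; to your credit, you make explicit the permutation-of-the-tail step that the paper leaves implicit.
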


\begin{proof}
Let $U$ be an entourage of the diagonal $\Delta_Y$ in $Y\times Y$. Then $U$ is a neighborhood
of the point $(\alpha,\alpha)\in U$, and so there is $\xi < \alpha$ such that
$(\xi,\alpha]\times (\xi,\alpha]\subseteq U$. Let $j_0 \in \mathbb{J}$ be such that
$f_{j|[0,\xi]} = \operatorname{id}_{[0,\xi]}$ for every $j \geq j_0$. Then,
for every $j\geq j_0$ and  $x \in Y$,
\begin{align}
(f_j(x),x) \in \Delta_Y \cup ((\xi,\alpha]\times (\xi,\alpha]) \subseteq U, 
\end{align}
as desired.
\end{proof}

\begin{lemma} \label{taudiscrete:lemma:countable}
Let $\alpha$ be an infinite limit ordinal with countable cofinality, 
and put $Y\coloneqq \alpha+1$ with the order topology.
Then $\homeo (Y)$ contains a countable subset that is not closed.
\end{lemma}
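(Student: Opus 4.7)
The plan is to produce a countably infinite sequence $(f_n)_{n<\omega}$ of pairwise distinct non-identity elements of $\homeo(Y)$ with $\lim f_n = \operatorname{id}_Y$; the set $S \coloneqq \{f_n : n < \omega\}$ will then be countable with $\operatorname{id}_Y \in \overline{S} \setminus S$, hence not closed.

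Since $\operatorname{cf}(\alpha) = \omega$, first fix a strictly increasing cofinal sequence $(\gamma_n)_{n<\omega}$ in $\alpha$. Because $\alpha$ is a limit ordinal, one has $\gamma_n + 2 < \alpha$, so $\gamma_n + 1$ and $\gamma_n + 2$ are two distinct successor ordinals lying in $Y$, and in particular both are isolated points of $Y$. Let $f_n\colon Y \to Y$ be the transposition that swaps these two points and fixes everything else. Since each of $\{\gamma_n+1\}$ and $\{\gamma_n+2\}$ is clopen in $Y$, $f_n$ is automatically a homeomorphism. The supports $\{\gamma_n + 1, \gamma_n + 2\}$ are pairwise disjoint across $n$, so the $f_n$ are pairwise distinct and none of them equals $\operatorname{id}_Y$.

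The final step is to check $\lim f_n = \operatorname{id}_Y$ using Proposition~\ref{taudscrete:prop:fj}. Given $\xi < \alpha$, cofinality of $(\gamma_n)$ yields $n_0$ with $\gamma_n \geq \xi$ for all $n \geq n_0$; then $f_n$ fixes every point of $[0,\xi]$, since its only two non-fixed points strictly exceed $\gamma_n \geq \xi$. Proposition~\ref{taudscrete:prop:fj} then delivers $f_n \to \operatorname{id}_Y$. There is no real obstacle: once the convergence proposition is available, the countable-cofinality hypothesis directly supplies the required sequence, and the rest is bookkeeping with supports.
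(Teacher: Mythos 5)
Your construction is essentially identical to the paper's own proof: the paper also fixes a strictly increasing cofinal sequence $\{\alpha_n\}_{n<\omega}$ in $\alpha$, takes $f_n$ to be the transposition of the isolated points $\alpha_n+1$ and $\alpha_n+2$, and invokes Proposition~\ref{taudscrete:prop:fj} to get $f_n \to \operatorname{id}_Y$, so that $S=\{f_n \mid n<\omega\}$ is countable and not closed. The argument is correct as written (the remark about pairwise disjoint supports is not quite literal if $\gamma_{n+1}=\gamma_n+1$, but it is also not needed, since all that matters is that no $f_n$ equals $\operatorname{id}_Y$).
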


\begin{proof}
Let $\{\alpha_n\}_{n < \omega}$ be a strictly increasing cofinal sequence in $\alpha$. 
Let $f_n\colon Y\rightarrow Y$ denote the transposition
\begin{align}
f_n(x) \coloneqq  \begin{cases}
\alpha_n +2 & x=\alpha_n +1 \\
\alpha_n +1 & x=\alpha_n +2 \\
x & \text{otherwise}.
\end{cases}
\end{align}
Since $f_n$ is the identity for all but two isolated points, it is a homeomorphism of $Y$.
Furthermore, $\lim f_n = \operatorname{id}_Y$ in $\homeo(Y)$ by Proposition~\ref{taudscrete:prop:fj},
because the $\{\alpha_n\}_{n < \omega}$  are cofinal and increasing.
Therefore, $S\coloneqq \{f_n \mid n < \omega\}$ is a countable subset that is not closed.  
\end{proof}

\begin{lemma} \label{taudiscrete:lemma:f_delta}
Let $\beta$ be an infinite limit ordinal with a strictly increasing cofinal
family $\{\beta_\delta\}_{\delta < \tau}$, put $\alpha\coloneqq \omega^\beta$,
and put $Y\coloneqq \alpha+1$ with the order topology. Then $\homeo (Y)$
contains a family of non-trivial homeomorphisms $\{f_\delta\}_{\delta < \tau}$ such that

\begin{enumerate}

\item
$f_\delta([\omega^{\beta_\delta +1},\alpha]) \subseteq [\omega^{\beta_\delta +1},\alpha]$
for every $\delta < \tau$, and

\item 
for every $\gamma < \tau$ and distinct $\delta_1,\delta_2 \leq \gamma$,
\begin{align}
f_{\delta_1}(\omega^{\beta_\gamma + 1} +1) \neq     f_{\delta_2}(\omega^{\beta_\gamma + 1} +1).
\end{align}
\end{enumerate}

\end{lemma}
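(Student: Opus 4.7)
The plan is to construct each $f_\delta$ as an involution of $Y$ that, for every $\gamma\geq\delta$, swaps the isolated point $\omega^{\beta_\gamma+1}+1$ with another isolated point $p(\gamma,\delta)$ inside a small ``bubble'' around $\omega^{\beta_\gamma+1}$, where $p(\gamma,\delta)$ is chosen to depend injectively on $\delta$ for each fixed $\gamma$. This yields condition~(b) directly, and with the right choice of bubbles also yields condition~(a).

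\textbf{Construction.} For each $\gamma<\tau$, put $B_\gamma\coloneqq[\omega^{\beta_\gamma+1},\omega^{\beta_\gamma+2})$. Since $\{\beta_\gamma\}_{\gamma<\tau}$ is strictly increasing, $\beta_{\gamma+1}\geq\beta_\gamma+1$, whence $\omega^{\beta_\gamma+2}\leq\omega^{\beta_{\gamma+1}+1}$; so the $B_\gamma$ are pairwise disjoint subintervals of $[0,\alpha)$. Each $B_\gamma$ has cardinality $|\omega^{\beta_\gamma+2}|=\max(\aleph_0,|\beta_\gamma|)\geq|\gamma|+1$ (using $\beta_\gamma\geq\gamma$), and since $\xi\mapsto\xi+1$ injects $B_\gamma$ into the successor ordinals of $B_\gamma$, the same lower bound applies to the set of isolated points of $Y$ contained in $B_\gamma$. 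Fix for each $\gamma<\tau$ an injection $\delta\mapsto p(\gamma,\delta)$ from $\{\delta:\delta\leq\gamma\}$ into the successor ordinals of $B_\gamma\setminus\{\omega^{\beta_\gamma+1}+1\}$. Define $f_\delta\colon Y\to Y$ to be the involutive bijection which, for each $\gamma\geq\delta$, transposes $\omega^{\beta_\gamma+1}+1$ with $p(\gamma,\delta)$, and fixes every other point. Because the transposed pairs lie in pairwise disjoint bubbles, this is well defined.

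\textbf{Continuity (the main obstacle).} Since $f_\delta$ swaps only successor ordinals, every limit ordinal is fixed, and continuity at isolated points is automatic; it remains to verify continuity at each limit ordinal $x$. Given a basic neighborhood $U=(\eta,x]$ (or $U=(\eta,\alpha]$ when $x=\alpha$), the aim is to exhibit $\eta'\in(\eta,x)$ such that $(\eta',x]$ is $f_\delta$-invariant, which gives $(\eta',x]\subseteq f_\delta^{-1}(U)$. Because each transposed pair lies in a single bubble, it suffices to arrange that no bubble has one swap point in $(\eta',x]$ and its partner outside. This is accomplished by choosing $\eta'$ past the (at most two) swap points of the unique bubble that straddles $x$ from below, and, when $x$ is a limit of bubble left-endpoints (in particular when $x=\alpha$, using the cofinality of $\{\omega^{\beta_{\gamma'}+1}\}_{\gamma'<\tau}$ in $\alpha$), additionally taking $\eta'$ to be the left endpoint $\omega^{\beta_{\gamma'}+1}$ of some bubble located above $\eta$; this endpoint is itself a limit ordinal and hence fixed. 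With such an $\eta'$, every bubble meeting $(\eta',x]$ is fully contained in $(\eta',x]$, so its swap pair remains inside, while all other points of $(\eta',x]$ are fixed. Continuity of the inverse is automatic, as $f_\delta=f_\delta^{-1}$.

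\textbf{Verifying the listed conditions.} For~(a): the bubbles $B_\gamma$ with $\gamma\geq\delta$ are contained in $[\omega^{\beta_\delta+1},\alpha]$, so $f_\delta$ fixes $[0,\omega^{\beta_\delta+1})$ pointwise and maps $[\omega^{\beta_\delta+1},\alpha]$ onto itself. For~(b): given distinct $\delta_1,\delta_2\leq\gamma$, one has $f_{\delta_i}(\omega^{\beta_\gamma+1}+1)=p(\gamma,\delta_i)$, and these values differ by injectivity of $\delta\mapsto p(\gamma,\delta)$. Finally, $f_\delta$ is non-trivial because it sends $\omega^{\beta_\delta+1}+1$ to $p(\delta,\delta)\neq\omega^{\beta_\delta+1}+1$.
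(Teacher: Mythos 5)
Your construction is correct, and it takes a genuinely different route from the paper's. The paper fixes, for each $\delta$, the unique representation $x=\omega^{\beta_\delta+1}\varepsilon+\omega^{\beta_\delta}m+\eta$ coming from Cantor normal form and lets $f_\delta$ permute the coefficient $m<\omega$ by a bijection $\varphi_\delta$ of $\omega$ with $\varphi_\delta(0)\neq 0$; that map moves whole blocks of order type $\omega^{\beta_\delta}$, including many limit ordinals, so the bulk of the paper's proof is a four-case verification of continuity at limit points, and property (b) is then extracted by computing $f_{\delta}(\omega^{\beta_\gamma+1}+1)$ explicitly, the injectivity in $\delta$ coming from the uniqueness of Cantor normal form. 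You instead assemble $f_\delta$ as a product of disjoint transpositions of isolated points, one per bubble $B_\gamma$ with $\gamma\geq\delta$, and simply prescribe the image of the test point $\omega^{\beta_\gamma+1}+1$ injectively in $\delta$; this makes (b) true by construction and reduces continuity to the observation that a self-bijection moving only isolated points, with each moved pair confined to one member of a well-separated family of intervals, admits invariant basic neighborhoods at every limit point. The supporting details check out: the disjointness of the bubbles, the cardinality estimate via $\beta_\gamma\geq\gamma$ guaranteeing enough isolated points in $B_\gamma$ for the injection $p(\gamma,\cdot)$, the fact that all swap points of $f_\delta$ lie in $[\omega^{\beta_\delta+1},\alpha)$ (giving (a) with equality), and the involution trick disposing of the inverse. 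The continuity sketch is terse --- a careful write-up would split into the cases where the set of bubbles with left endpoint below $x$ has a maximum or not, and where $x$ is or is not a limit of bubble left endpoints --- but you have identified exactly the two obstructions (a bubble straddling $x$ from below, and bubbles straddling the chosen $\eta'$) and the correct remedies, so nothing is missing. Your version is the more elementary of the two; the paper's buys a single closed-form formula for $f_\delta$ at the cost of a heavier continuity argument.
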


\begin{proof} Fix an ordinal $\delta < \tau$. Since $\beta_\delta < \beta$, there is an ordinal 
$\rho_\delta$ such that $\beta = \beta_\delta+ 1 + \rho_\delta$.
Then $\alpha = \omega^\beta = \omega^{\beta_\delta + 1} \omega^{\rho_\delta}$, and 
every $x <\alpha$ may be uniquely represented in the form
\begin{align}
    x=\omega^{\beta_\delta + 1} \varepsilon + \omega^{\beta_\delta} m + \eta,
\end{align}
where $\varepsilon < \omega^{\rho_\delta}$, $m < \omega$, and $\eta < \omega^{\beta_\delta}$.
Indeed, let
\begin{align}
x & = \omega^{\nu_1}  k_1 + \cdots +   \omega^{\nu_n}  k_n
\end{align}
be the Cantor's normal form of $x$, that is,
$x \geq \nu_1 > \cdots > \nu_n$ and $k_1,\cdots,k_n$ are non-zero natural
numbers (\cite[2.26]{Jech}). Put 
$\eta \coloneqq  \sum\limits_{\nu_i < \beta_\delta} \omega^{\nu_i} k_i$, where the sums are formed in the same
order as in the Cantor's normal form. 
Clearly, $\eta  <\omega^{\beta_\delta}$. 
For every $i$ such that $\nu_i > \beta_\delta$, there is an ordinal 
$\mu_i$ such that $\nu_i \coloneqq  \beta_\delta + 1 + \mu_i$.
Thus,
\begin{align}
    x & =   \omega^{\nu_1}  k_1 + \cdots +   \omega^{\nu_n}  k_n \nonumber \\
    & = \sum\limits_{\nu_i > \beta_\delta} \omega^{\nu_i} k_i  + \omega^{\beta_\delta} m +  \underbrace{\sum\limits_{\nu_i < \beta_\delta} \omega^{\nu_i} k_i}_\eta \nonumber \\
    & = \sum\limits_{\nu_i > \beta_\delta}  \omega^{\beta_\delta + 1 + \mu_i} k_i  + \omega^{\beta_\delta} m + \eta \nonumber\\
    & = \omega^{\beta_\delta + 1} \underbrace{\sum\limits_{\nu_i > \beta_\delta} \omega^{\mu_i} k_i }_\varepsilon + \omega^{\beta_\delta} m + \eta \nonumber \\
    & = \omega^{\beta_\delta + 1} \varepsilon + \omega^{\beta_\delta} m + \eta,
\end{align}
where $m =0$ if $\nu_i\neq \beta_\delta$ for any $i$ and $m=k_{i_0}$ if $\nu_{i_0}=\beta_\delta$. (The uniqueness of this representation
follows from the uniqueness of the Cantor's normal form.)

We construct now $f_\delta$.
Let $\varphi_\delta\colon \omega\rightarrow \omega$ be a bijection such that $\varphi_\delta(0) \neq 0$. 
Put
\begin{align}
    f_\delta(\omega^{\beta_\delta + 1} \varepsilon + \omega^{\beta_\delta} m + \eta) \coloneqq 
    \begin{cases}
    \omega^{\beta_\delta + 1} \varepsilon + \omega^{\beta_\delta} \varphi_\delta(m) + \eta & \eta > 0 \\
     \omega^{\beta_\delta + 1} \varepsilon + \omega^{\beta_\delta} (\varphi_\delta(m-1) +1) & \eta =0, m >0 \\
      \omega^{\beta_\delta + 1} \varepsilon & \eta=m=0 \\
    \alpha & x=\alpha.
    \end{cases}
\end{align}
It is clear that $f_\delta\neq \operatorname{id}_Y$.
We show that $f_\delta \in \homeo (Y)$. First, we note that $f_\delta$ is a bijection, whose inverse is
of the the same form with $\varphi_\delta$ replaced with $\varphi_\delta^{-1}$. Thus, it suffices
to show that $f_\delta$ is continuous.  Let $x \in Y$ be an infinite limit ordinal,
and let $\{x_j\} \subseteq Y$ be a net
converging to $x$. Without loss of generality, we may assume that $x_j < x$ for every~$j$ and that the $\{x_j\}$ are non-decreasing. We distinguish
the cases used to define $f_\delta$.

{\itshape Case 1.} If $x=\omega^{\beta_\delta + 1} \varepsilon + \omega^{\beta_\delta} m + \eta$, where $\eta > 0$, 
then without loss of generality, we may assume that $x_j > \omega^{\beta_\delta + 1} \varepsilon + \omega^{\beta_\delta} m$, 
and thus $x_j = \omega^{\beta_\delta + 1} \varepsilon + \omega^{\beta_\delta} m + \eta_j$, where $0<\eta_j$ and $\{\eta_j\}$ converges to $\eta$. Therefore,
\begin{align}
    f_\delta(x_j) & 
        = \omega^{\beta_\delta + 1} \varepsilon + \omega^{\beta_\delta} \varphi_\delta(m) + \eta_j \longrightarrow
     \omega^{\beta_\delta + 1} \varepsilon + \omega^{\beta_\delta} \varphi_\delta(m) + \eta = f_\delta(x).
\end{align}

{\itshape Case 2.} If $x=\omega^{\beta_\delta + 1} \varepsilon + \omega^{\beta_\delta} m$, where $m > 0$, 
then without loss of generality, we may assume that $x_j > \omega^{\beta_\delta + 1} \varepsilon + \omega^{\beta_\delta} (m-1)$,
and thus $x_j = \omega^{\beta_\delta + 1} \varepsilon + \omega^{\beta_\delta} (m-1) + \eta_j$, where $0<\eta_j$ and $\{\eta_j\}$ converges to $\omega^{\beta_\delta}$. Therefore,
\begin{align}
    f_\delta(x_j) & 
        = \omega^{\beta_\delta + 1} \varepsilon + \omega^{\beta_\delta} \varphi_\delta(m-1) + \eta_j \longrightarrow
     \omega^{\beta_\delta + 1} \varepsilon + \omega^{\beta_\delta} (\varphi_\delta(m-1)) + \omega^{\beta_\delta}  = f_\delta(x).
\end{align}

{\itshape Cases 3 and 4.} If $x=\omega^{\beta_\delta + 1} \varepsilon$ where $\varepsilon \leq \omega^{\rho_\delta}$, then $x_j = \omega^{\beta_\delta + 1} \varepsilon_j + \omega^{\beta_\delta} m_j + \eta_j$ where $\varepsilon_j < \varepsilon$. Since $\{x_j\}$ are non-decreasing, the $\{\varepsilon_j\}$ are non-decreasing. 
Put $\varepsilon_0 \coloneqq \sup \varepsilon_j$. Clearly, $\varepsilon_0 \leq \varepsilon$. If $\varepsilon_0=\varepsilon$,
then $\omega^{\beta_\delta + 1} \varepsilon_j \longrightarrow \omega^{\beta_\delta + 1} \varepsilon =x$, and
$\omega^{\beta_\delta + 1} \varepsilon_j \leq f_\delta(x_j) \leq \omega^{\beta_\delta + 1} \varepsilon =x$;
hence, $f_\delta(x_j) \longrightarrow x=f_\delta(x)$. If $\varepsilon_0 < \varepsilon$, then
\begin{align}
\omega^{\beta_\delta + 1} (\varepsilon_0+1) \geq \omega^{\beta_\delta + 1} (\varepsilon_j+1) \geq x_j \longrightarrow \omega^{\beta_\delta + 1} \varepsilon, 
\end{align}
and thus $\varepsilon_0 +1 = \varepsilon$. Since $x_j > \omega^{\beta_\delta + 1} \varepsilon_0$ eventually, 
without loss of 
generality, we may assume that $\varepsilon_j = \varepsilon_0$ for all $j$, and  $0< m_j$ and $m_j \rightarrow \omega$.
Since $\varphi_\delta$ is a bijection, it follows that $\varphi_\delta(m_j)\longrightarrow \omega$ and $\varphi_\delta(m_j-1)\longrightarrow \omega$. Therefore,
\begin{align}
f(x_j) \geq   \omega^{\beta_\delta + 1} \varepsilon_0 + \omega^{\beta_\delta} \min\{\varphi_\delta(m_j-1),\varphi_\delta(m_j)\}  \longrightarrow 
\omega^{\beta_\delta + 1} \varepsilon_0 + \omega^{\beta_\delta} \omega = \omega^{\beta_\delta + 1} \varepsilon =f_\delta(x).
\end{align}
This shows that $f_\delta$ and $f_\delta^{-1}$ are continuous, and therefore  $f_\delta \in \homeo(Y)$. 

Property (a) follows directly from the definition and the more general
property of $f_\delta$ that for every $\varepsilon \leq \omega^{\rho_\delta}$,
if $x\geq \omega^{\beta_\delta +1} \varepsilon$, then $f_\delta(x) \geq \omega^{\beta_\delta +1} \varepsilon$.

Lastly, we prove property (b). Let $\gamma < \tau$ and $\delta_1,\delta_2 \leq \gamma$ be distinct. 
There are ordinals $\zeta_i$ such that $\beta_\gamma + 1 = \beta_{\delta_i} + 1 +  \zeta_i$ (for $i=1,2$).
By the definition, 
\begin{align}
f_{\delta_i}(\omega^{\beta_\gamma +1} + 1) & =
f_{\delta_i}(\omega^{\beta_{\delta_i} +1 + \zeta_i } + 1) = 
f_{\delta_i}(\omega^{\beta_{\delta_i} +1} \omega^{\zeta_i } + 1) \\
& =  \omega^{\beta_{\delta_i} +1} \omega^{\zeta_i } + \omega^{\beta_{\delta_i} +1}  \varphi_{\delta_i}(0) + 1
= \omega^{\beta_\gamma +1} + \omega^{\beta_{\delta_i} +1}  \varphi_{\delta_i}(0) + 1.
\end{align}
Since $\delta_1 \neq \delta_2$ and $\varphi_{\delta_i}(0) \neq 0$, it follows that
$\omega^{\beta_{\delta_1} +1}  \varphi_{\delta_1}(0) \neq \omega^{\beta_{\delta_2} +1}  \varphi_{\delta_2}(0)$.
Therefore, one obtains $f_{\delta_1}(\omega^{\beta_\gamma +1} + 1) \neq f_{\delta_2}(\omega^{\beta_\gamma +1} + 1)$, as desired.
\end{proof}

\begin{lemma}  \label{taudiscrete:lemma:tilda}
Let $\beta$ be an infinite limit ordinal with a strictly increasing cofinal
family $\{\beta_\delta\}_{\delta<\tau}$, put $\alpha\coloneqq \omega^\beta$,
and put $Y\coloneqq \alpha+1$ with the order topology. 
Let $\psi_\delta\colon [0,\alpha] \rightarrow [\omega^{\beta_\delta} +1,\alpha]$ denote the homeomorphism
defined by $\psi_\delta(x) \coloneqq  \omega^{\beta_\delta} +1 +x$. 
For $f \in \homeo(Y)$, define $\Psi_\delta(f)$ by
\begin{align}
\Psi_\delta(f)\colon [0,\alpha] &\longrightarrow [0,\alpha] \nonumber \\
x & \longmapsto 
\begin{cases}
x & x \in [0,\omega^{\beta_\delta}] \\
\psi_\delta f \psi^{-1}_\delta(x) & x > \omega^{\beta_\delta}.
\end{cases}
\end{align}

\begin{enumerate}

\item 
$\Psi_\delta(f) \in \homeo(Y)$ for every $f\in  \homeo(Y)$ and $\delta < \tau$.

\item
If  $f\in  \homeo(Y)$ and $\delta <\tau$ are such that
$f([\omega^{\beta_\delta +1},\alpha]) \subseteq [\omega^{\beta_\delta +1},\alpha]$, then $\Psi_\delta(f)(x) =f(x)$ for
every $x \in [\omega^{\beta_\delta +1},\alpha]$.

\item
For every family $\{f_\delta\}_{\delta < \tau} \subseteq \homeo (Y)$, 
the net $\{\Psi_\delta(f_\delta)\}_{\delta < \tau}$ converges to $\operatorname{id}_Y$
in $\homeo (Y)$.
\end{enumerate}

\end{lemma}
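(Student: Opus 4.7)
The plan is to handle the three parts in order, all of them resting on two absorption identities for ordinal addition that follow from Cantor normal form: first, $(\omega^{\beta_\delta}+1)+\alpha=\alpha$ (since $\omega^{\beta_\delta}+1<\omega^{\beta_\delta+1}\leq\omega^\beta=\alpha$ and $\beta$ is a limit ordinal), which ensures $\psi_\delta$ really maps into $[\omega^{\beta_\delta}+1,\alpha]$; and second, $(\omega^{\beta_\delta}+1)+x=x$ whenever $x\geq \omega^{\beta_\delta+1}$, which says that $\psi_\delta$ fixes every point of $[\omega^{\beta_\delta+1},\alpha]$ pointwise. Both follow from the standard identity $\mu+\omega^\lambda=\omega^\lambda$ for $\mu<\omega^\lambda$.

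For (a), I would first note that $\omega^{\beta_\delta}$ is a limit ordinal while $\omega^{\beta_\delta}+1$ is its immediate successor, so $\omega^{\beta_\delta}+1$ is isolated in $Y$. Consequently $[0,\omega^{\beta_\delta}]$ and $[\omega^{\beta_\delta}+1,\alpha]$ form a clopen partition of $Y$. The map $\psi_\delta$ is an order-isomorphism from $[0,\alpha]$ onto $[\omega^{\beta_\delta}+1,\alpha]$ by the first absorption fact, hence a homeomorphism in the order topology; therefore $\psi_\delta f\psi_\delta^{-1}$ is a self-homeomorphism of the clopen set $[\omega^{\beta_\delta}+1,\alpha]$. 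Since $\Psi_\delta(f)$ is the identity on the complementary clopen set $[0,\omega^{\beta_\delta}]$, gluing across the clopen partition yields $\Psi_\delta(f)\in\homeo(Y)$.

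For (b), the second absorption fact says $\psi_\delta(x)=x$ for every $x\in[\omega^{\beta_\delta+1},\alpha]$, and hence also $\psi_\delta^{-1}(x)=x$ on that set. Thus for $x\in[\omega^{\beta_\delta+1},\alpha]$, the hypothesis $f([\omega^{\beta_\delta+1},\alpha])\subseteq [\omega^{\beta_\delta+1},\alpha]$ gives $\psi_\delta^{-1}(x)=x$, then $f(x)\in[\omega^{\beta_\delta+1},\alpha]$, and finally $\psi_\delta(f(x))=f(x)$, so $\Psi_\delta(f)(x)=\psi_\delta f\psi_\delta^{-1}(x)=f(x)$.

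For (c), I would invoke Proposition~\ref{taudscrete:prop:fj}: it suffices to show that for every $\xi<\alpha$ there exists $\delta_0<\tau$ such that $\Psi_\delta(f_\delta)_{|[0,\xi]}=\operatorname{id}_{[0,\xi]}$ for every $\delta\geq\delta_0$. Given $\xi<\alpha=\omega^\beta$, its Cantor normal form has leading exponent strictly below $\beta$, so $\xi<\omega^{\beta'}$ for some $\beta'<\beta$. By cofinality of $\{\beta_\delta\}_{\delta<\tau}$ in $\beta$, pick $\delta_0$ with $\beta_{\delta_0}\geq\beta'$; then for every $\delta\geq\delta_0$, $\xi<\omega^{\beta'}\leq\omega^{\beta_\delta}$, and so $[0,\xi]\subseteq[0,\omega^{\beta_\delta}]$, on which $\Psi_\delta(f_\delta)$ is the identity by definition. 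The only real technical obstacle is the careful bookkeeping of the absorption identities underpinning parts (a) and (b); once those are in hand, the rest of the argument is a matter of correctly recognizing the clopen decomposition and applying Proposition~\ref{taudscrete:prop:fj}.
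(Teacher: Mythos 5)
Your proposal is correct and follows essentially the same route as the paper: the clopen decomposition of $Y$ at $\omega^{\beta_\delta}$ for (a), the absorption computation $(\omega^{\beta_\delta}+1)+x=x$ for $x\geq\omega^{\beta_\delta+1}$ for (b), and Proposition~\ref{taudscrete:prop:fj} applied to the cofinal family $\{\omega^{\beta_\delta}\}_{\delta<\tau}$ for (c). The only cosmetic slip is the claim that $\omega^{\beta_\delta}$ is a limit ordinal (which can fail if $\beta_\delta=0$), but this is not needed anywhere: the clopen splitting only uses that $\omega^{\beta_\delta}+1$ is a successor.
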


\begin{proof}
(a) Let $f\in  \homeo(Y)$. Since $\Psi(f)$ is continuous on the clopen sets $[0,\omega^{\beta_\delta}+1)$ and $(\omega^{\beta_\delta},\alpha]$, it is continuous
on $Y$. Furthermore, it is easily seen that $\Psi(f)^{-1}=\Psi(f^{-1})$, and thus $\Psi_\delta(f) \in \homeo(Y)$.

(b) Let $x \in  [\omega^{\beta_\delta +1},\alpha]$. Then $x = \omega^{\beta_\delta +1} + y$ for some $y \in [0,\alpha]$, and so
\begin{align}
\psi_\delta(x) = \psi_\delta(\omega^{\beta_\delta +1} + y) = \omega^{\beta_\delta} + 1 + \omega^{\beta_\delta +1} + y
=  \omega^{\beta_\delta +1} + y =x.
\end{align}
Thus, $\psi_{\delta}^{-1}(x)=x$, and $f\psi_\delta^{-1}(x)=f(x)$. Therefore, $\psi_\delta f\psi_\delta^{-1}(x)=f(x)$, because
$f(x) \in [\omega^{\beta_\delta +1},\alpha]$ by our assumption.

(c) Since $\{\omega^{\beta_\delta}\}_{\delta < \tau}$ is a strictly
increasing cofinal family in $\alpha$ and 
$\Psi_\delta(f_\delta)$ is the identity on $[0,\omega^{\beta_\delta}]$, it follows by Proposition~\ref{taudscrete:prop:fj} 
that  $\lim \Psi_\delta(f_\delta) = \operatorname{id}_Y$.
\end{proof}

\begin{proof}[Proof of Theorem  {\ref{main:thm:taudiscrete}}.]
By Lemma~\ref{taudiscrete:lemma:countable}, we may assume that $\tau > \omega$.
Let 
\begin{align}
\alpha = \omega^{\beta_1} k_1 + \cdots +   \omega^{\beta_n}  k_n
\end{align}
be the Cantor's normal form of $\alpha$, that is,
$\alpha \geq \beta_1 > \cdots > \beta_n$ and $k_1,\cdots,k_n$ are non-zero natural
numbers (\cite[2.26]{Jech}). Since $\tau > \omega$, it follows
that $\operatorname{cf}(\beta_n) \neq 1$, and so 
$\operatorname{cf}(\beta_n) = \operatorname{cf}(\omega^{\beta_n})= \tau$. The space
$\omega^{\beta_n}+1$ embeds as a clopen subset into $Y$, and so
$\homeo(\omega^{\beta_n}+1)$  embeds as a closed subgroup into $\homeo(Y)$.
Therefore, without loss of generality, we may assume that $\alpha=\omega^\beta$,
where $\beta$ is an ordinal of uncountable cofinality.

Let $\{\beta_\delta\}_{\delta<\tau}$ be a strictly increasing cofinal family in $\beta$, 
and let $\{f_\delta\}_{\delta <\tau}$ be a family in $\homeo(Y)$ as
provided by Lemma~\ref{taudiscrete:lemma:f_delta}.
Put $h_\delta\coloneqq \Psi_\delta(f_\delta)$, where $\Psi_\delta$ is as in
Lemma~\ref{taudiscrete:lemma:tilda}, and
set $S\coloneqq \{h_\delta \mid \delta < \tau\}$. 
By Lemma~\ref{taudiscrete:lemma:f_delta}(a), 
$f_\delta([\omega^{\beta_\delta +1},\alpha]) \subseteq [\omega^{\beta_\delta +1},\alpha]$
for every $\delta < \tau$, and thus by Lemma~\ref{taudiscrete:lemma:tilda}(b),
\begin{align}
h_\delta(x) = \Psi_\delta(f_\delta)(x) = f_\delta(x) \text{ for every } x\in [\omega^{\beta_\delta +1},\alpha].
\label{taudiscrete:eq:Psif}
\end{align}

We show that $S$ is $\tau$-discrete but not closed. (Since $|S|\leq \tau$ by the construction,
$|S|=\tau$ follows from these two.)
By Lemma~\ref{taudiscrete:lemma:tilda}(c),
$h_\delta = \Psi_\delta(f_\delta)$ converges to $\operatorname{id}_Y$.
Since $f_\delta\neq \operatorname{id}_Y$, it follows that
$\Psi_\delta(f_\delta) \neq \operatorname{id}_Y$.
Thus, $S$ is not closed.

Let $C\subseteq \tau$ be a subset such that $|C| < \tau$.
Put $\gamma\coloneqq \sup C$. 
Since $\tau$ itself is a 
regular cardinal, $\gamma < \tau$. Suppose that
$\{h_{\delta_j}\}_{j\in \mathbb{J}}$ is a net
in $\{h_\delta \mid \delta \in C\}$ that converges
to $h \in \homeo(Y)$. Then, in particular, 
$\lim h_{\delta_j}(\omega^{\beta_\gamma+1} +1) = 
h(\omega^{\beta_\gamma+1} +1)$. Thus, by (\ref{taudiscrete:eq:Psif}),
\begin{align}
\lim h_{\delta_j}(\omega^{\beta_\gamma+1} +1) 
= \lim f_{\delta_j}(\omega^{\beta_\gamma+1} +1) 
= h(\omega^{\beta_\gamma+1} +1).
\end{align}
Since $\omega^{\beta_\gamma+1} +1$ is an isolated point,
so is its homeomorphic image  $h(\omega^{\beta_\gamma+1} +1)$.
Therefore, the net is eventually constant, and so there is $j_0 \in \mathbb{J}$
such that $f_{\delta_j}(\omega^{\beta_\gamma+1 } +1)=h_{\delta_{j_0}}(\omega^{\beta_\gamma+1} +1)$
for $j\geq j_0$.
Hence, by Lemma~\ref{taudiscrete:lemma:f_delta}(b), 
$\delta_j=\delta_{j_0}$ for every $j\geq j_0$.
It follows that $h=\lim h_{\delta_j} = h_{\delta_{j_0}} \in S$, as desired. 
\end{proof}

\begin{corollary} \label{taudiscrete:cor:sum}
Suppose that $\lambda = \alpha + \xi$, where
$\alpha$ is an ordinal, $\xi >0$ is an infinite limit ordinal, and 
$\alpha \geq \operatorname{cf}(\xi)$. 
Then $X=\lambda$ does not have CSHP.
\end{corollary}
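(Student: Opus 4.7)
The plan is to split on the cofinality $\tau\coloneqq\operatorname{cf}(\xi)$ and apply either Lemma~\ref{prelim:lemma:clopen}(b) or Theorem~\ref{main:thm:COproducts} in each case.

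\textbf{Case $\tau=\omega$.} I would fix a strictly increasing cofinal $\omega$-sequence $\{\xi_n\}_{n<\omega}$ in $\xi$ and consider $D\coloneqq\{\alpha+\xi_n+1\mid n<\omega\}\subseteq\lambda$. Each element of $D$ is a successor ordinal, hence isolated in $\lambda$, and $D$ is closed in $\lambda$ because every infinite subsequence is strictly increasing with supremum $\alpha+\xi=\lambda$, which lies outside the space $[0,\lambda)$. Thus $D$ is an infinite discrete clopen subset of $\lambda$, and Lemma~\ref{prelim:lemma:clopen}(b) yields that $\lambda$ does not have CSHP.

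\textbf{Case $\tau>\omega$.} I would apply Theorem~\ref{main:thm:COproducts} to the clopen decomposition $\lambda=Y\amalg Z$ with $Y\coloneqq[0,\alpha]\cong\alpha+1$ compact and $Z\coloneqq[\alpha+1,\lambda)$ locally compact of order type $\xi$; both pieces are clopen since $\alpha+1$ is a successor, hence isolated in $\lambda$. Fix a normal (strictly increasing, continuous at limits) cofinal sequence $\{\xi_\delta\}_{\delta<\tau}$ in $\xi$ and take $K_\delta\coloneqq[\alpha+1,\alpha+\xi_\delta]$ as the cofinal family in $\mathscr{K}(Z)$. For condition~(I): since $\tau\leq\alpha$ and $\tau+1$ is isolated in $Y$, the set $[0,\tau]\subseteq Y$ is clopen and homeomorphic to $\tau+1$; Theorem~\ref{main:thm:taudiscrete} applied to the infinite limit ordinal $\tau$ (with $\operatorname{cf}(\tau)=\tau$, since $\tau$ is a regular cardinal) produces a $\tau$-discrete, non-closed subset of $\homeo(\tau+1)$ of cardinality $\tau$, which extends by the identity outside $[0,\tau]$ to such a subset of $\homeo(Y)$.

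For condition~(II), I would take $g_\beta\in\homeo_{cpt}(Z)$ to be the transposition of the isolated successor points $\alpha+\xi_\beta+1$ and $\alpha+\xi_\beta+2$; the $g_\beta$ are pairwise distinct, and $\supp g_\beta\nsubseteq K_\delta$ whenever $\delta<\beta$, since $\alpha+\xi_\beta+1>\alpha+\xi_\delta$. The main obstacle is verifying that $\lim g_\beta=\operatorname{id}_Z$ in $\homeo(\beta Z)$, which reduces to showing $|f(\alpha+\xi_\beta+1)-f(\alpha+\xi_\beta+2)|\to 0$ as $\beta\to\tau$ for every continuous $f\colon Z\to[0,1]$. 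I would argue this by contradiction: supposing the difference stays $\geq\epsilon_0>0$ on some cofinal $B\subseteq\tau$, a pigeonhole argument in $[0,1]^2$ extracts a cofinal $B'\subseteq B$ on which $f(\alpha+\xi_\beta+1)$ and $f(\alpha+\xi_\beta+2)$ lie in two disjoint subintervals $I_1,I_2$ of $[0,1]$. Since $\tau$ is regular uncountable and $B'$ cofinal, the set of limit points of $B'$ from below is a club in $\tau$; picking any $\delta<\tau$ in it, the normality of $\{\xi_\delta\}$ gives $\xi_\beta\to\xi_\delta$ along $B'\cap\delta$, and hence both $\alpha+\xi_\beta+1$ and $\alpha+\xi_\beta+2$ converge to $\alpha+\xi_\delta\in Z$. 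Continuity of $f$ then forces $f(\alpha+\xi_\delta)\in\overline{I_1}\cap\overline{I_2}=\emptyset$, a contradiction. With (I) and (II) verified, Theorem~\ref{main:thm:COproducts} concludes that $\lambda$ does not have CSHP.
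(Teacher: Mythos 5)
Your proof is correct and follows the same overall strategy as the paper's: split on $\tau=\operatorname{cf}(\xi)$, invoke Lemma~\ref{prelim:lemma:clopen}(b) when $\tau=\omega$ (your set $D$ is exactly the paper's), and otherwise apply Theorem~\ref{main:thm:COproducts} to the clopen decomposition $\lambda\cong(\alpha+1)\amalg\xi$ with the same transpositions of $\xi_\beta+1$ and $\xi_\beta+2$. Two sub-steps are handled differently. For condition~(I), the paper first shrinks the compact piece to $\tau+1$ via Lemma~\ref{prelim:lemma:clopen}(a) and then quotes Theorem~\ref{main:thm:taudiscrete} directly, whereas you keep $Y=\alpha+1$ and transport the $\tau$-discrete set from $\homeo(\tau+1)$ along the closed embedding induced by the clopen subset $[0,\tau]$; both work, and the transport argument is the same move the paper itself makes inside the proof of Theorem~\ref{main:thm:taudiscrete}. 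For condition~(II), the paper observes that $\operatorname{cf}(\xi)>\omega$ forces $\beta Z=\xi+1$ (citing Gillman--Jerison, 5N1) and then obtains $\lim g_\beta=\operatorname{id}_Z$ immediately from Proposition~\ref{taudscrete:prop:fj}, since the $g_\beta$ are eventually the identity on every initial segment. Your pigeonhole-plus-club argument is a correct, self-contained replacement: it essentially re-proves the underlying fact that a continuous $[0,1]$-valued function on an ordinal of uncountable cofinality cannot separate points cofinally often, at the cost of extra work (and it quietly uses the regularity of $\tau$ and the normality of $\{\xi_\delta\}$, both of which are available). The paper's route is shorter only because Proposition~\ref{taudscrete:prop:fj} was set up in advance for exactly this purpose.
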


\begin{proof}
Put $\tau \coloneqq \operatorname{cf}(\xi)$. 
Without loss of generality, we may assume that $\tau > \omega$.
(If $\tau=\omega$, 
then $\xi$ contains an increasing cofinal sequence
$\{\xi_n\}_{n < \omega}$, and thus $D\coloneqq \{\alpha + \xi_n+1 \mid n < \omega\}$
is an infinite discrete clopen subset of $X$, and so 
by Lemma~\ref{prelim:lemma:clopen}(b), $X$ does not have CSHP.)

Since $\xi$ is an infinite ordinal, $1+\xi = \xi$, 
and so $\lambda = (\alpha+1)+\xi$. Thus,
$X\cong (\alpha + 1) \amalg \xi$, and $(\tau+1)\amalg \xi$ embeds into $X$ as a clopen subset. 
Therefore, by Lemma~\ref{prelim:lemma:clopen}(a), 
it suffices to show that $(\tau+1)\amalg \xi$
does not have CSHP.

Put $Y\coloneqq \tau +1$ and $Z \coloneqq \xi$. We verify
that the conditions of Theorem~\ref{main:thm:COproducts}
are satisfied. Clearly, $Y$ is compact Hausdorff and $Z$ is locally compact
Hausdorff. Let $\{\xi_\gamma\mid \gamma < \tau\}$
be cofinal and increasing in $\xi$. Put
$K_\gamma \coloneqq [0,\xi_\gamma]$ for $\gamma <\tau$. Then
$\{K_\gamma\}_{\gamma < \tau}$ is cofinal
in $\mathscr{K}(Z)$.

(I) By Theorem~\ref{main:thm:taudiscrete}, $\homeo(Y)$ contains
a $\tau$-discrete subset of cardinality $\tau$ that is not closed.

(II) For $\gamma < \tau$, let $f_\gamma$ denote
the transposition that interchanges $\xi_\gamma +1$
and $\xi_\gamma + 2$, and leaves every other point fixed.
Clearly, $f_\gamma \in \homeo_{cpt}(Z)$ and 
$\operatorname{supp}(f_\delta)\cap K_\gamma = \emptyset$
for every $\gamma < \delta < \tau$. 
Since $\tau > \omega$, one has $\beta Z = \xi + 1$ (cf.~\cite[5N1]{GilJer}).
For every $\delta < \xi$, there is $\gamma_0< \tau$ such 
that $\xi_\gamma > \delta$ for every $\gamma \geq \gamma_0$, 
and so $f_{\gamma| [0,\delta]} = \operatorname{id}_{[0,\delta]}$.
Therefore, by Proposition~\ref{taudscrete:prop:fj}, $\lim f_\gamma = \operatorname{id}_Z$.
\end{proof}

\section{Products of ordinals}

\label{sect:ordinals}

In this section, we prove Theorem~\ref{main:thm:ordinals}, which provides
necessary and sufficient conditions for a product of ordinals to have CSHP.

\begin{Ltheorem*}[\ref{main:thm:ordinals}]
Let $X\coloneqq \lambda_1\times \cdots \lambda_k \times \mu_1\times \cdots \times \mu_l$  equipped with the product topology, 
where $\lambda_1,\ldots,\lambda_k$ are infinite limit ordinals and $\mu_1,\ldots,\mu_k$ are successor ordinals. 
The space $X$ has CSHP if and only if there is
an uncountable regular cardinal $\kappa$ such that
$\lambda_1 = \cdots =\lambda_k = \kappa$ and
$\mu_i \leq \kappa$ for every $i=1,\ldots,l$.
\end{Ltheorem*}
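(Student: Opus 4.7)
Sufficiency is established in \cite[Theorem~D(c)]{RDGL1}, so I focus on necessity. Assuming $X$ has CSHP, my plan is to pass repeatedly to clopen subspaces via Lemma~\ref{prelim:lemma:clopen}(a) until the machinery already developed (Corollary~\ref{taudiscrete:cor:sum} and Theorem~\ref{main:thm:products}) yields a contradiction against each possible failure mode. The key technical enabler is that every factor of $X$ contains an isolated ordinal which is a clopen singleton (namely the successor $1 \in \lambda_i$ for each limit factor, or $0 \in \mu_j$ for each successor factor), so that factors can be ``projected out'' one at a time while preserving CSHP.

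I first argue that each $\lambda_i$ must be an uncountable regular cardinal. Fixing every factor except $\lambda_i$ at an isolated ordinal yields a clopen copy of $\lambda_i$ inside $X$, which inherits CSHP by Lemma~\ref{prelim:lemma:clopen}(a). If $\lambda_i$ were not a regular uncountable cardinal, then either $\operatorname{cf}(\lambda_i) = \omega$, in which case $\{\xi_n + 1\}_{n<\omega}$ for a cofinal sequence $\{\xi_n\}$ is an infinite discrete clopen subset of $\lambda_i$, contradicting Lemma~\ref{prelim:lemma:clopen}(b); or $\operatorname{cf}(\lambda_i)$ is uncountable and strictly smaller than $\lambda_i$, in which case the decomposition $\lambda_i = \operatorname{cf}(\lambda_i) + (\lambda_i - \operatorname{cf}(\lambda_i))$ (the second summand being an infinite limit ordinal of cofinality $\operatorname{cf}(\lambda_i)$) satisfies the hypotheses of Corollary~\ref{taudiscrete:cor:sum}, again a contradiction.

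Next, suppose for contradiction that $\mu_j > \lambda_i$ for some $i, j$. Writing $\mu_j = \nu_j + 1$ with $\nu_j \geq \lambda_i$, the initial segment $[0, \lambda_i] \subseteq \mu_j$ is clopen (since the ordinals $\lambda_i + 1, \lambda_i + 2, \ldots$ accumulate to $\lambda_i + \omega$ rather than to $\lambda_i$) and is homeomorphic to $\lambda_i + 1$. Substituting this clopen subspace for $\mu_j$ and fixing the remaining factors at isolated ordinals produces a clopen copy of $\lambda_i \times (\lambda_i + 1)$ in $X$, which by Lemma~\ref{prelim:lemma:clopen}(a) would inherit CSHP. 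But I would apply Theorem~\ref{main:thm:products} with $Y = \lambda_i + 1$ (compact) and $Z = \lambda_i$ (zero-dimensional, locally compact, pseudocompact, non-compact, with $\operatorname{cf}(\mathscr{K}(Z), \subseteq) = \lambda_i$ by regularity); hypothesis~(I) follows from Theorem~\ref{main:thm:taudiscrete} applied with $\alpha = \lambda_i$. This shows $\lambda_i \times (\lambda_i + 1)$ lacks CSHP, a contradiction.

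Finally, to see that all $\lambda_i$ coincide, suppose $\lambda_i < \lambda_j$. The clopen-subspace mechanism of the previous paragraph applies verbatim with $\lambda_j$ in place of $\mu_j$: $[0, \lambda_i] \cong \lambda_i + 1$ is clopen in $\lambda_j$, and after substitution and the usual fixings, $\lambda_i \times (\lambda_i + 1)$ reappears as a clopen subspace of $X$, delivering the same contradiction via Theorem~\ref{main:thm:products}. The main obstacle I foresee is merely the careful bookkeeping of the clopen-subspace reductions; no new construction beyond the tools built in Sections~\ref{sect:prodcoprod} and~\ref{sect:taudiscrete} appears to be required.
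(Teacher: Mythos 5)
Your proposal is correct, and its overall skeleton coincides with the paper's: reduce to clopen subspaces via Lemma~\ref{prelim:lemma:clopen}(a), and defeat any factor exceeding the common value $\kappa$ by exhibiting a clopen copy of $(\kappa+1)\times\kappa$, which fails CSHP by Theorem~\ref{main:thm:products} combined with Theorem~\ref{main:thm:taudiscrete} (your verification of the hypotheses of Theorem~\ref{main:thm:products} for $Y=\lambda_i+1$, $Z=\lambda_i$ matches the paper's verbatim). The one place where you genuinely diverge is the step showing that each limit factor is an uncountable regular cardinal. The paper isolates this as Theorem~\ref{ordinals:thm:lambda} and proves it in two stages: first it uses the Cantor normal form together with Corollary~\ref{taudiscrete:cor:sum} to force $\lambda=\omega^{\beta_1}$, and only then derives $\operatorname{cf}(\lambda)\geq\lambda$ from the identity $\tau=\omega^\tau$ and the absorption $\tau+\lambda=\lambda$. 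You instead apply Corollary~\ref{taudiscrete:cor:sum} once, directly, to the decomposition $\lambda=\tau+(\lambda-\tau)$ with $\tau=\operatorname{cf}(\lambda)$: when $\omega<\tau<\lambda$ the tail $\xi=\lambda-\tau$ is a nonzero (hence infinite) limit ordinal with $\operatorname{cf}(\xi)=\operatorname{cf}(\lambda)=\tau$, so the hypothesis $\alpha\geq\operatorname{cf}(\xi)$ of the corollary holds with equality and no normal-form analysis is needed. This is a real simplification --- it buys a shorter, more conceptual proof of the regularity step at no cost in generality, since the corollary is applied exactly as stated. Two minor bookkeeping points you should make explicit in a final write-up: the degenerate case $k=0$ (where $X$ is compact and the statement is trivial, as the paper notes by assuming $k\geq 1$), and the observation that $\operatorname{cf}(\mathscr{K}(\lambda_i),\subseteq)=\lambda_i$ because every cofinal family of compact (i.e., closed bounded) subsets must have suprema cofinal in $\lambda_i$, which you assert ``by regularity'' but do not spell out.
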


Sufficiency was proven in the authors' previous work (\cite[Theorem D(c)]{RDGL1}), and so
it is only necessity that has to be shown.  We first prove a special case of Theorem~\ref{main:thm:ordinals}.

\begin{theorem} \label{ordinals:thm:lambda}
Let $\lambda$ be an infinite limit ordinal, and let $X$ be $\lambda$ equipped with
the order topology. If the space $X$ has CSHP,
then $\lambda$ is an uncountable regular cardinal.
\end{theorem}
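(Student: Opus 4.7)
The plan is to prove the contrapositive: assume $\lambda$ is an infinite limit ordinal that is \emph{not} an uncountable regular cardinal, and deduce that $X=\lambda$ does not have CSHP. I would split the argument into two cases according to $\tau\coloneqq\operatorname{cf}(\lambda)$, each matching a tool already assembled in the paper.

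\textbf{Case 1: $\tau=\omega$.} Pick a strictly increasing cofinal sequence $(\lambda_n)_{n<\omega}$ in $\lambda$ and put $D\coloneqq\{\lambda_n+1\mid n<\omega\}$. Each $\lambda_n+1$ is a successor ordinal strictly below $\lambda$, hence an isolated point of $X$; in particular $D$ is open and discrete. Viewing $X$ as a subspace of $\lambda+1$, the only accumulation point of $D$ is $\lambda$ itself (because $\lambda_n+1\to\lambda$ and the sequence is cofinal in $\lambda$), and that point lies outside $X$; so $D$ is also closed in $X$. Thus $D$ is an infinite discrete clopen subset of $X$, and Lemma \ref{prelim:lemma:clopen}(b) forces $X$ to fail CSHP.

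\textbf{Case 2: $\tau>\omega$.} Since $\lambda$ is not an uncountable regular cardinal and $\tau\leq\lambda$, we must have $\tau<\lambda$; the ordinal subtraction theorem then produces a unique $\lambda'>0$ with $\lambda=\tau+\lambda'$. The ordinal $\lambda'$ cannot be a successor (otherwise $\lambda=(\tau+\eta)+1$ would be a successor, contradicting $\lambda$ limit), and the standard identity $\operatorname{cf}(\tau+\lambda')=\operatorname{cf}(\lambda')$ forces $\operatorname{cf}(\lambda')=\tau>\omega$; in particular $\lambda'$ is an infinite limit ordinal. Applying Corollary \ref{taudiscrete:cor:sum} with $\alpha\coloneqq\tau$ and $\xi\coloneqq\lambda'$---so that the hypothesis $\alpha\geq\operatorname{cf}(\xi)$ collapses to the trivial $\tau\geq\tau$---yields that $X=\lambda$ does not have CSHP.

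All the substantive work has been done upstream: Lemma \ref{prelim:lemma:clopen}(b) drives Case 1, while Corollary \ref{taudiscrete:cor:sum} (which in turn rests on Theorems \ref{main:thm:COproducts} and \ref{main:thm:taudiscrete}) drives Case 2. I do not anticipate any genuine obstacle here; the only point that deserves a little care is confirming in Case 2 that the remainder $\lambda'$ is an infinite limit ordinal of cofinality exactly $\tau$, which is elementary ordinal arithmetic.
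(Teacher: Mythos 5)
Your proof is correct, and in the main case it takes a genuinely more direct route than the paper. Case 1 coincides with the paper's first step: both produce an infinite discrete clopen subset from a countable cofinal sequence and invoke Lemma~\ref{prelim:lemma:clopen}(b). In Case 2 the paper argues in two stages: it first feeds the Cantor normal form $\lambda=\omega^{\beta_1}k_1+\cdots+\omega^{\beta_n}k_n$ into Corollary~\ref{taudiscrete:cor:sum} (with $\xi=\omega^{\beta_n}$) to conclude $\lambda=\omega^{\beta_1}$, and then applies the corollary a second time to $\lambda=\tau+\lambda$, after checking $\omega^{\tau}+\omega^{\beta_1}=\omega^{\beta_1}$ when $\tau<\beta_1$, to obtain regularity. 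You instead apply Corollary~\ref{taudiscrete:cor:sum} exactly once, to the subtraction decomposition $\lambda=\tau+\lambda'$, having verified that $\tau<\lambda$ (since $\operatorname{cf}(\lambda)=\lambda$ would make $\lambda$ an uncountable regular cardinal), that $\lambda'$ is a nonzero limit ordinal, and that $\operatorname{cf}(\lambda')=\operatorname{cf}(\tau+\lambda')=\tau$, so the hypothesis $\alpha\geq\operatorname{cf}(\xi)$ holds with equality. This one-shot argument handles both failure modes (not additively indecomposable, or indecomposable but singular) simultaneously and dispenses with the normal-form arithmetic; the only thing the paper's longer route buys is the explicit intermediate fact $\lambda=\omega^{\beta_1}$, which is not needed for the statement itself. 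The elementary ordinal facts you rely on are all correctly identified and standard.
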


\begin{proof}
If $\lambda$ had countable cofinality, then it 
would contain an infinite discrete clopen subset. It would follow then
by Lemma~\ref{prelim:lemma:clopen}(b),
that $\lambda$ does not have CSHP, contrary to our assumption.
Thus, $\operatorname{cf}(\lambda) > \omega$.  

Next, we show that $\lambda = \omega^\beta$ for some ordinal $\beta$. Let
\begin{align}
\lambda = \omega^{\beta_1} k_1 + \cdots +   \omega^{\beta_n} k_n
\end{align}
be the Cantor's normal form of $\lambda$, that is,
$\lambda \geq \beta_1 > \cdots > \beta_n$ and $k_1,\cdots,k_n$ are non-zero natural
numbers (\cite[2.26]{Jech}).
Put $\alpha \coloneqq   \omega^{\beta_1} k_1 + \cdots +   \omega^{\beta_n} (k_n-1)$
and $\xi = \omega^{\beta_n}$.  It follows from $\beta_1 > \cdots > \beta_n$
that either $\alpha \geq \xi$ or $\alpha=0$.
Since $\lambda$ is an infinite limit ordinal, one has $\beta_n > 0$, and
so $\xi >0$  and $\xi$ is a limit ordinal. Thus, by Corollary~\ref{taudiscrete:cor:sum} 
applied to $\lambda=\alpha + \xi$, one obtains that
$\alpha < \operatorname{cf}(\xi) \leq \xi$. Therefore, $\alpha = 0$. In other words,
$n=1$ and $k_1=1$, and $\lambda = \omega^{\beta_1}$.

We show that $\lambda$ is a regular cardinal by proving that
$\operatorname{cf}(\lambda) \geq \lambda$.
Put $\tau\coloneqq \operatorname{cf}(\lambda)$.
One has $\tau=\omega^\tau$ (ordinal exponentiation), because  $\tau$ is an uncountable cardinal,
and for every ordinal $\theta < \tau$, one has $|\omega^\theta| = \max(\omega,|\theta|) < \tau$ (where
$\omega^\theta$ is ordinal exponentiation).

If $\tau < \beta_1$, then $\omega^\tau < \omega^{\beta_1}$, and so
\begin{align}
\tau + \lambda = \omega^\tau + \omega^{\beta_1} = 
\omega^{\beta_1} = \lambda,
\end{align}
(\cite[Theorem 1 from XIV.6 and Theorem 1 from XIV.19]{Sierpinski}).
Thus, by Corollary~\ref{taudiscrete:cor:sum}, $\tau+\lambda=\lambda$
cannot  have CSHP, contrary to our assumption. 
Hence, $\tau \geq \beta_1$, and
\begin{align}
\tau = \omega^\tau \geq \omega^{\beta_1} = \lambda,
\end{align}
as desired.
\end{proof}

We proceed now to prove Theorem~\ref{main:thm:ordinals}.

\begin{proof}[Proof of Theorem~\ref{main:thm:ordinals}.]
Suppose that $X$ has CSHP. Without loss of generality, we may assume that  $k \geq 1$.
Put $\kappa\coloneqq \min \lambda_i$. Since $\kappa$ embeds as a clopen subset of $X$, by Lemma~\ref{prelim:lemma:clopen}(a),
$\kappa$ has CSHP. Thus, by  Theorem \ref{ordinals:thm:lambda},
$\kappa$ is an uncountable  regular cardinal.

Put $Y\coloneqq \kappa+1$  and $Z\coloneqq \kappa$ with the order topology. By Theorem~\ref{main:thm:taudiscrete},
$\homeo(Y)$ contains a $\kappa$-discrete subset of cardinality $\kappa$ that is not closed.
The space $Z$ is  zero-dimensional, locally compact, pseudocompact, and 
$\kappa\coloneqq \operatorname{cf}(\mathscr{K}(Z),\subseteq)$. Therefore, 
by Theorem~\ref{main:thm:products}, the product $Y \times Z$ does not have CSHP.

Assume that there is an $i$ such that
$\lambda_i > \kappa$ or $\mu_i > \kappa$. Then
$\kappa + 1 \leq \lambda_i$ or $\kappa+1 \leq \mu_i$, 
respectively, and thus, by Lemma~\ref{prelim:lemma:clopen}(a),
$(\kappa +1) \times \kappa$ has CSHP, being homeomorphic to
a clopen subset of $X$. This contradiction shows that
all $\lambda_i$ are equal and $\mu_i \leq \kappa$ for every
$i$.
\end{proof}

\begin{corollary} \label{ordinals:cor:coproduct}
Let $\alpha \leq \beta$ be infinite ordinals. The disjoint union (coproduct)
$\alpha \amalg \beta$ has CSHP if and only if one of the following conditions hold:

\begin{enumerate}

\item
$\alpha$ and $\beta$ are successor ordinals; or

\item
$\beta$ is an uncountable regular cardinal, and in addition, $\alpha=\beta$ or $\alpha$ is a successor ordinal. 
\end{enumerate}

\end{corollary}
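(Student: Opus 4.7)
The plan is to prove sufficiency directly from Theorem~\ref{main:thm:ordinals} and necessity via a reduction to a small forbidden configuration.

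For sufficiency, case (a) is trivial since $\alpha \amalg \beta$ is compact (both $\alpha, \beta$ being successor ordinals, hence compact). For case (b), if $\alpha = \beta = \kappa$, then $\kappa \amalg \kappa$ is homeomorphic to the product $2 \times \kappa$, which satisfies the hypotheses of Theorem~\ref{main:thm:ordinals} and hence has CSHP. If $\alpha$ is a successor with $\alpha < \beta = \kappa$, then $\alpha = [0,\alpha)$ embeds as a clopen subset of $\kappa$: the complement $[\alpha,\kappa)$ is open because the successor point $\alpha \in \kappa$ is isolated. Hence $\alpha \amalg \kappa$ embeds as a clopen subset of $\kappa \amalg \kappa$, and CSHP descends by Lemma~\ref{prelim:lemma:clopen}(a).

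For necessity, suppose $\alpha \amalg \beta$ has CSHP. Both $\alpha$ and $\beta$ are clopen in $\alpha \amalg \beta$, so each has CSHP by Lemma~\ref{prelim:lemma:clopen}(a), and by Theorem~\ref{ordinals:thm:lambda} any limit ordinal among $\{\alpha,\beta\}$ is forced to be an uncountable regular cardinal. I would then establish the key claim: \emph{if $\alpha$ is a limit ordinal, then $\alpha = \beta$}. Granting the claim, the case analysis is straightforward: if both $\alpha, \beta$ are successors then (a) holds; if $\alpha$ is a limit then the claim forces $\alpha = \beta$ with $\beta$ uncountable regular, giving (b); and if $\alpha$ is a successor while $\beta$ is a limit then $\beta$ is uncountable regular and (b) holds.

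The proof of the claim is the main conceptual step, and I would carry it out by exhibiting a clopen copy of $\alpha \amalg (\alpha+1)$ inside $\alpha \amalg \beta$ and contradicting its CSHP. Assuming $\alpha$ is a limit with $\alpha < \beta$, one has $\alpha + 1 \leq \beta$, so $[0,\alpha] = \alpha + 1$ is clopen in $\beta$ (its top element is separated from above by the isolated successor $\alpha + 1 \in \beta$, or $\alpha + 1 = \beta$ in which case the statement is vacuous). Hence $\alpha \amalg (\alpha+1)$ is clopen in $\alpha \amalg \beta$ and must have CSHP by Lemma~\ref{prelim:lemma:clopen}(a). But applying Theorem~\ref{main:thm:COproducts} with $Y \coloneqq \alpha + 1$, $Z \coloneqq \alpha$, $\tau \coloneqq \alpha$, and cofinal family $K_\gamma \coloneqq [0,\gamma]$ in $\mathscr{K}(\alpha)$ contradicts this: condition (I) is delivered by Theorem~\ref{main:thm:taudiscrete} applied to $\alpha + 1$, and for (II) the transposition $g_\gamma$ interchanging the isolated points $\gamma + 1$ and $\gamma + 2$ (exactly as in Corollary~\ref{taudiscrete:cor:sum}(II)) satisfies $\supp g_\gamma = \{\gamma+1,\gamma+2\} \nsubseteq K_\delta$ for $\delta < \gamma$, and $\lim g_\gamma = \operatorname{id}_\alpha$ by Proposition~\ref{taudscrete:prop:fj}. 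The principal obstacle is spotting this reduction to $\alpha \amalg (\alpha + 1)$; once spotted, the verifications of (I) and (II) are the ones already carried out in Corollary~\ref{taudiscrete:cor:sum}.
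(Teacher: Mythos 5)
Your proposal is correct and follows essentially the same route as the paper: sufficiency from Theorem~\ref{main:thm:ordinals} (via $\kappa\times 2$) plus Lemma~\ref{prelim:lemma:clopen}(a), and necessity by reducing a hypothetical limit $\alpha<\beta$ to the clopen sub-coproduct $\alpha\amalg(\alpha+1)$. The only (immaterial) difference is that the paper identifies $\alpha\amalg(\alpha+1)$ with the ordinal $\alpha+\alpha$ and invokes Theorem~\ref{ordinals:thm:lambda}, whereas you rerun the underlying application of Theorem~\ref{main:thm:COproducts} directly; both are valid.
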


\begin{proof}
Suppose that $\alpha \amalg \beta$ has CSHP. By Lemma~\ref{prelim:lemma:clopen}(a), $\alpha$ and $\beta$
both have CSHP. Thus, by Theorem~\ref{ordinals:thm:lambda}, $\alpha$ and $\beta$ are either
successor ordinals or uncountable regular cardinals. If $\alpha=\beta$, then we are done. 
On the other hand, if $\alpha < \beta$, then
$\alpha+1$ is a clopen subset of $\beta$, and thus, by Lemma~\ref{prelim:lemma:clopen}(a), 
$\alpha \amalg (\alpha +1)\cong (\alpha +1) + \alpha = \alpha + \alpha$ has CSHP, being  a clopen subset of
$\alpha \amalg \beta$. By Theorem~\ref{ordinals:thm:lambda}, $\alpha$ is a successor ordinal,
because $\alpha+\alpha$ is not a cardinal.

Conversely, if $\alpha$ and $\beta$ are successor ordinals, then $\alpha \amalg \beta$ is compact, and
we are done. So, we confine our attention to case (b). Suppose that $\beta$ is an uncountable
regular cardinal. If $\alpha= \beta$, then  $\alpha \amalg \beta \cong \beta \times 2$ has CSHP
by Theorem~\ref{main:thm:ordinals}. If $\alpha < \beta$ and $\alpha$ is a successor ordinal, then
$\alpha \amalg \beta \cong \alpha + \beta  = \beta$, which has CSHP
by Theorem~\ref{ordinals:thm:lambda}.
\end{proof}

\section*{Acknowledgments}

We are grateful to Karen Kipper for her kind help in proofreading this paper for grammar and punctuation. 


{\footnotesize

\bibliography{dahmen-lukacs}

}

\begin{samepage}

\bigskip
\noindent
\begin{tabular}{l @{\hspace{1.6cm}} l}
Rafael Dahmen						    & G\'abor Luk\'acs \\
Department of Mathematics				& Department of Mathematics and Statistics\\
Karlsruhe Institute of Technology		& Dalhousie University\\
D-76128 Karlsruhe					    & Halifax, B3H 3J5, Nova Scotia\\
Germany                			        & Canada\\
{\itshape rafael.dahmen@kit.edu}        & {\itshape lukacs@topgroups.ca}
\end{tabular}

\end{samepage}

\end{document}